\documentclass[a4paper, 11pt, parskip=half]{amsart}

\usepackage{customtemplate1}
\usepackage{mathtools}
\usepackage{tabularx}
\usepackage{framed}
\usepackage{array}
\usepackage{shortcuts}
\usepackage{stmaryrd}
\usepackage{bbm}
\begin{document}
\title{
    Determining newforms via arithmetic relations among Fourier coefficients}
	\date{}
	\author{Arvind Kumar, Moni Kumari, Prabhat Kumar Mishra}
    \address{Department of Mathematics, Indian Institute of Technology Jammu, Jagti, PO Nagrota, NH-44 Jammu 181221, J \& K, India\vspace*{-5pt}}
    \email{arvind.kumar@iitjammu.ac.in\vspace*{-6pt}}
	\email{moni.kumari@iitjammu.ac.in\vspace*{-6pt}}
	\email{2022rma0025@iitjammu.ac.in\vspace*{-6pt}}

\subjclass[2020]{Primary: 11F11; Secondary: 11F30}
\keywords{Newforms, Fourier coefficients, effective joint Sato--Tate distribution, multiplicity one theorem, twist-equivalence, density of primes}

\begin{abstract}
We investigate the distribution of primes satisfying arithmetic inequalities involving the Fourier coefficients of two non-CM newforms at prime powers. More precisely, we establish asymptotic formulas for the number of primes for which the differences, products, and ratios of the Fourier coefficients satisfy prescribed inequalities, together with explicit estimates for the corresponding densities. The proofs combine an effective joint Sato--Tate theorem with a geometric analysis of the associated semi-algebraic regions. As applications, we obtain new multiplicity one criteria, improve a theorem of Matom\"aki on small differences between Fourier coefficients, establish density-one analogues in the spirit of the Atkin--Serre conjecture, and derive a new characterization of twist-equivalence through the distribution of ratios of Fourier coefficients.
 \end{abstract}
\maketitle
\section{Introduction and statement of results}
Let $k\ge2$ be an even integer and $N\ge1$ an integer. We denote by $S_k(N)$ the space of cusp forms of weight $k$ for the congruence subgroup $\Gamma_0(N)$. Throughout the paper,
$f\in S_k(N)$ and $f'\in S_{k'}(N')$ are normalized non-CM newforms, with Fourier expansions 
$$
        f(z) = \sum_{n\ge 1} a(n)n^{(k-1)/2}e^{ 2 \pi i n z} \quad\quad {\rm and}\quad \quad f'(z) = \sum_{n\ge 1} a'(n)n^{(k'-1)/2}e^{ 2 \pi i n z}.
$$
We say that $f$ and $f'$ are \emph{twist-equivalent}, if there exists a Dirichlet character $\chi$ such that
$
f'=f\otimes\chi,
$
where
$
(f\otimes\chi)(z)
=\sum_{n\ge1}\chi(n)a(n)n^{(k-1)/2}e^{2\pi inz}.
$

One of the central problems in the theory of modular forms is determining newforms from partial information about their Fourier coefficients, or more generally, from arithmetic relations satisfied by them.
The first decisive result in this direction is the multiplicity one theorem, which asserts that 
if 
$$a(p) = a'(p)$$
for all but finitely many primes $p$ then $f=f'.$
Over the years, considerable effort has been devoted to strengthening and generalizing this principle (see \cites{duke-kowalski,
rajan21,  wang-cheng} and references therein). A major advance is the strong multiplicity one theorem (\cites{murty-pujahari, patankar-rajan}), which shows that if $a(p)=a'(p)$ on a set of primes $p$ of positive density, then  $f$ and $f'$ are twist-equivalent. 
In other words, exact agreement on a sufficiently large set of primes forces a strong relationship between the underlying forms.
More recently, \cite{ak}*{Theorem 2.17} and \cite{kkm2}*{Corollary 2.2} proved that if
$$
P(a(p^m),a'(p^n))=0,
$$
for some non-trivial polynomial $P$ and positive integers $m$ and $n$ on a set of primes of positive density, then $f$ and $f'$ are twist-equivalent. These results show that exact equality of Hecke eigenvalues is only one instance of a more general rigidity phenomenon concerning the Fourier coefficients of newforms.

The results mentioned above concern exact equalities or polynomial identities among Fourier coefficients. A natural next step is to investigate whether similar rigidity persists when the Fourier coefficients are close to one another rather than being identical. In this direction, Matom\"aki \cite[Theorem~3]{matomaki1} proved that if
$$
|a(p)-a'(p)|\le \frac1{50}
$$
for a set of primes of analytic density exceeding $15/31$, then $f$ and $f'$ are twist-equivalent. This shows that exact equality of Fourier coefficients may be replaced by a sufficiently strong closeness condition.

The above result naturally raises the following question: to what extent can newforms still be distinguished when exact algebraic relations among their Fourier coefficients are replaced by arithmetic inequalities? The main purpose of this paper is to answer this question in several natural settings. More precisely, we establish asymptotic formulas for the number of primes $p$ satisfying inequalities of the form
$$
|a(p^m)-a'(p^n)|<\varepsilon,\qquad
|a(p^m)a'(p^n)|<\varepsilon,\qquad
\left|
\frac{a(p^m)}{a'(p^n)}-t
\right|<\varepsilon,
$$
where $m,n$ are positive integers, $\varepsilon>0$, and $t\in\mathbf{R}$. These asymptotic formulas have several arithmetic applications.  We obtain new multiplicity one criteria, improve Matom\"aki's density theorem ({\em loc. cit.}), establish effective bounds for the least prime satisfying prescribed inequalities, derive density-one analogues in the spirit of the Atkin--Serre conjecture, and prove that the ratios of Fourier coefficients are quantitatively dense in $\R$.

Although these results concern different arithmetic relations, we prove them using a common approach which is based on an effective joint Sato--Tate theorem established in our earlier work \cite{kkm2}. The coefficients at prime powers arise naturally in symmetric power $L$-functions and satisfy
$ 
a(p^m)=U_m\!\left(\frac{a(p)}2\right),
$ for $p\nmid N$,
where $U_m$ denotes the Chebyshev polynomial of degree $m$ of the second kind. Therefore, the inequalities considered above are transformed into questions concerning the joint distribution of pairs
$$
(a(p),a'(p))
$$
inside suitable semi-algebraic subsets of $[-2,2]^2$. The required asymptotic formulas then follow from the effective joint Sato--Tate theorem established in \cite{kkm2} together with a geometric analysis of these regions. 

\subsection*{Results}
We begin with differences of Fourier coefficients $|a(p^m)-a'(p^n)|$. 
The following theorem gives an asymptotic formula for the number of primes at which these differences are smaller than a prescribed positive constant.
\begin{theorem}\label{thm:sympower-diagonal1}
Let $f \in S_k(N)$ and $f'\in S_{k'}(N')$ be twist-inequivalent non-CM newforms.
Then for any $\varepsilon>0$, and positive integers $m,n$, we have
 $${\#\left\{p\le x:
\left|a(p^m)-a'(p^n)\right|< \varepsilon
\right\}
=
C_{m,n,\varepsilon} \pi(x) + O\!\left(
\frac{\pi(x) (\log(kk'NN' \log x))^{1/4}}{(\log x)^{1/8}}
\right)},$$
where the constant $C_{m,n,\e}$ is positive and depends on $m,n$, and $\e$, whereas, the implied constant depends only on $m$ and $n$. Moreover, if $\e<1$ and  $m\le n$, then 
$$ 
\frac{(2n-1)}{144 \pi (m+2)(n+1)} \e\le
C_{m,n,\varepsilon}  \le \frac{32}{\pi^2}  m  (2\varepsilon)^{1/m}.
$$
\end{theorem}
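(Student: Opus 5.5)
We reduce the count to a joint Sato--Tate statement. For $p\nmid NN'$ write $a(p)=2\cos\theta_p$ and $a'(p)=2\cos\theta'_p$ with $\theta_p,\theta'_p\in[0,\pi]$. Then $a(p^m)=U_m(\cos\theta_p)=\sin((m+1)\theta_p)/\sin\theta_p$, and similarly for $a'(p^n)$. Consider the region
$$R_{m,n,\varepsilon}=\Bigl\{(\theta,\theta')\in[0,\pi]^2:\Bigl|\tfrac{\sin((m+1)\theta)}{\sin\theta}-\tfrac{\sin((n+1)\theta')}{\sin\theta'}\Bigr|<\varepsilon\Bigr\}.$$
The count in the theorem equals $\#\{p\le x:(\theta_p,\theta'_p)\in R_{m,n,\varepsilon}\}+O(\omega(NN'))$. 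Since $f,f'$ are non-CM and twist-inequivalent, the effective joint Sato--Tate theorem of \cite{kkm2} applies. It gives equidistribution of $(\theta_p,\theta'_p)$ with respect to $d\mu\,d\mu$, where $d\mu=\frac{2}{\pi}\sin^2\theta\,d\theta$. Its error for regions with nice boundaries (e.g.\ boxes, or sets whose boundary is covered by few boxes) is of size $\pi(x)(\log(kk'NN'\log x))^{1/4}/(\log x)^{1/8}$. Setting $C_{m,n,\varepsilon}=\mu\otimes\mu(R_{m,n,\varepsilon})$, the asymptotic follows once we check that $R_{m,n,\varepsilon}$ is admissible. It is semi-algebraic in $(\cos\theta,\cos\theta')$, being defined by polynomial inequalities of bounded degree in $m,n$. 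So its boundary consists of $O_{m,n}(1)$ monotone curve pieces. We approximate it from inside and outside by unions of $O_{m,n}(\delta^{-1})$ boxes of side $\delta$, apply the joint theorem to each box, and optimize $\delta$. This should reproduce the stated error with implied constant depending only on $m,n$. This approximation step, and making the box count uniform in $\varepsilon$, is the main technical point.

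\textbf{Positivity and lower bound.} Since $\varepsilon<1$, $m\le n$, and the diagonal curve $U_m(\cos\theta)=U_n(\cos\theta')$ is nonempty, it suffices to exhibit explicit rectangles inside $R_{m,n,\varepsilon}$. Near $\theta'=\pi/(n+1)\cdot j$ the function $U_n(\cos\theta')$ vanishes, and similarly $U_m(\cos\theta)$ vanishes at $\theta=\pi/(m+1)$. Both have derivatives bounded by roughly $(m+1)^2$ and $(n+1)^2$ after dividing by $\sin$. Take $\theta$ in an interval of length $\asymp\varepsilon/(m+2)$ around a zero of $U_m$, and $\theta'$ in intervals of length $\asymp \varepsilon/(n+1)$ around the $n$ zeros $j\pi/(n+1)$. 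On each such box both values are $<\varepsilon/2$ in absolute value, so the box lies in $R$. Summing the $\mu\otimes\mu$ measures of these boxes, we bound $\sin^2$ below on the middle zeros; about half of them (counting $2n-1$ in the paper's normalization) stay away from the endpoints. This gives a bound of the shape $c\,\varepsilon(2n-1)/((m+2)(n+1))$, and tracking constants should yield $1/(144\pi)$. We will accept a crude lower bound on $\sin^2$ at the chosen points to reach that constant.

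\textbf{Upper bound.} For fixed $\theta'$, the set of $\theta$ with $U_m(\cos\theta)\in(c-\varepsilon,c+\varepsilon)$ is a preimage of an interval of length $2\varepsilon$ under a degree-$m$ polynomial in $x=\cos\theta$. Such a preimage has Lebesgue measure at most $m\cdot 2\,(2\varepsilon/2^{m-1}\cdot\ldots)^{1/m}$. We use the Pólya--Cartan type lemma: for a monic-leading-coefficient-$2^m$ polynomial $P$, the set $\{x:|P(x)-c|<\varepsilon\}$ has measure $\le 4(\varepsilon/2^m)^{1/m}\cdot$ const, or more crudely is a union of at most $m$ intervals each of length $\ll(2\varepsilon)^{1/m}$. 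Converting $dx$ to $d\mu$ costs at most a factor $\frac{2}{\pi}\sqrt{1-x^2}\le \frac{2}{\pi}$. Integrating over $\theta'$ with total mass $1$ then gives $C_{m,n,\varepsilon}\le \frac{32}{\pi^2}m(2\varepsilon)^{1/m}$, with constants to be matched by the precise form of Pólya's inequality.
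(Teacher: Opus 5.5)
Your asymptotic and upper-bound steps are sound (with one fix below), but your lower bound has a genuine gap: as constructed it only gives order $\varepsilon^2$, not the stated linear-in-$\varepsilon$ bound.

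\textbf{The lower bound.} Each of your boxes is a product of a $\theta$-interval of length $\asymp\varepsilon/(m+2)$ and a $\theta'$-interval of length $\asymp\varepsilon/(n+1)$. Even if you pair all $m$ zeros of $U_m$ with all $n$ zeros of $U_n$, the total $\mu\otimes\mu$-mass of the boxes is $O(\varepsilon^2)$, more precisely of order $\varepsilon^2 mn/((m+2)(n+1))$. It is not of order $\varepsilon(2n-1)/((m+2)(n+1))$ as you assert, and for $\varepsilon<1$ an $\varepsilon^2$ bound does not imply the stated one. No adjustment of constants helps: neighbourhoods of finitely many isolated points of the level set $\{U_m(\cos\theta)=U_n(\cos\theta')\}$ always have area $O(\varepsilon^2)$.

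The missing idea is a tube of width $\asymp\varepsilon$ around an arc of that level set of length $\asymp 1$. The paper builds it with Fubini. Write $F_\ell(\psi)=\sin((\ell+1)\psi)/\sin\psi$. On $[\pi/6,5\pi/6]^2$ one has $\sin^2\ge 1/4$ and $|F_m'|\le 4(m+2)$. The set of $\phi\in[\pi/6,5\pi/6]$ with $|\sin((n+1)\phi)|\le 1/2$ has measure at least $\frac{2n-1}{3}\cdot\frac{\pi}{3(n+1)}$, and for every such $\phi$ one has $|F_n(\phi)|\le 1$. The intermediate value theorem then gives $\theta_\phi\in[\pi/6,5\pi/6]$ with $F_m(\theta_\phi)=F_n(\phi)$. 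The Lipschitz bound puts the whole interval $|\theta-\theta_\phi|<\varepsilon/(4(m+2))$ inside the region. Multiplying $\frac{1}{4\pi^2}\cdot\frac{\varepsilon}{4(m+2)}\cdot\frac{(2n-1)\pi}{9(n+1)}$ gives exactly the stated constant. Your boxes do still prove $C_{m,n,\varepsilon}>0$ for every $\varepsilon>0$.

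\textbf{The asymptotic.} You need no box approximation. Theorem~\ref{thm:main2} applies directly to $E_{I,P}$ with $P(u,v)=U_m(u/2)-U_n(v/2)$ and $I=(-\varepsilon,\varepsilon)$. Its error constant depends only on $\deg P=\max(m,n)$, so it is uniform in $\varepsilon$. As you describe it, the box step would actually lose a power. If each of $\asymp\delta^{-1}$ pieces carries an error of the final size $E=(\log(kk'NN'\log x))^{1/4}(\log x)^{-1/8}$, optimizing $\delta^{-1}E+\delta$ gives only $E^{1/2}$.

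\textbf{The upper bound.} This is a valid alternative to the paper's route, which uses Proposition~\ref{thm: margulis} together with $\|U_m(u/2)-c\|_{[-2,2]}\ge (m+2)/2$. P\'olya's inequality for the monic polynomial $2^{-m}(U_m(x)-c)$ gives $|\{x\in[-1,1]:|U_m(x)-c|<\varepsilon\}|\le 2(\varepsilon/2)^{1/m}$. Keeping the Sato--Tate weight in $\theta'$, which has total mass $1$, yields $C_{m,n,\varepsilon}\le\frac{4}{\pi}(\varepsilon/2)^{1/m}$. This is sharper than the stated $\frac{32}{\pi^2}m(2\varepsilon)^{1/m}$.
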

The constant $C_{m,n,\varepsilon}$ admits a natural geometric interpretation, and it is precisely the joint Sato--Tate measure of the semi-algebraic region
$$
\left\{
(u,v)\in[-2,2]^2:
\left|U_m\!\left(\frac u2\right)
-
U_n\!\left(\frac v2\right)\right|
<\varepsilon
\right\}.
$$
The explicit upper bound for $C_{m,n,\varepsilon}$  yields the following multiplicity one type criterion.
\begin{corollary}\label{cor:difference}
Let  $f$ and $f'$ be non-CM newforms,  $\varepsilon<1$ be a positive real number, and $m\le n$ be positive integers.
If the set of primes $p$ such that
    $$
|a(p^m)-a'(p^n)|<\varepsilon 
    $$
    is of density greater than $\frac{32}{\pi^2} m  (2\varepsilon)^{1/m}$, then $f$ and $f'$ are twist-equivalent.
\end{corollary}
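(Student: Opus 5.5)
The corollary should follow from Theorem~\ref{thm:sympower-diagonal1} by contraposition. Suppose $f$ and $f'$ are twist-inequivalent. Both are non-CM newforms, so Theorem~\ref{thm:sympower-diagonal1} applies with the given $\varepsilon$ and $m,n$. It gives
$$
\#\left\{p\le x: |a(p^m)-a'(p^n)|<\varepsilon\right\} = C_{m,n,\varepsilon}\,\pi(x) + O\!\left(\frac{\pi(x)(\log(kk'NN'\log x))^{1/4}}{(\log x)^{1/8}}\right).
$$
For fixed $f,f'$, the levels and weights $k,k',N,N'$ are constants. The error term is therefore $O(\pi(x)(\log\log x)^{1/4}/(\log x)^{1/8})=o(\pi(x))$.

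Dividing by $\pi(x)$ and letting $x\to\infty$, the set of such primes has natural density exactly $C_{m,n,\varepsilon}$. Since $\varepsilon<1$ and $m\le n$, the upper bound in Theorem~\ref{thm:sympower-diagonal1} gives
$$
C_{m,n,\varepsilon}\le \frac{32}{\pi^2}\,m\,(2\varepsilon)^{1/m}.
$$
This contradicts the hypothesis that the density exceeds this quantity. Hence $f$ and $f'$ are twist-equivalent.

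Two points need care, though neither is a real obstacle.

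\textbf{Meaning of ``density''.} If the hypothesis means upper density, or analytic density, the argument is unchanged. Once the natural density exists and equals $C_{m,n,\varepsilon}$, the upper and lower densities coincide with it, and so does the analytic (Dirichlet) density.

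\textbf{Primes dividing the levels.} There are only finitely many primes dividing $NN'$, so they do not affect any density statement.

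All the substantive work lies in Theorem~\ref{thm:sympower-diagonal1}, specifically in its explicit upper bound for $C_{m,n,\varepsilon}$. That bound comes from estimating the joint Sato--Tate measure of the region where $U_m(u/2)$ lies within $\varepsilon$ of $U_n(v/2)$. The proof of the corollary itself is immediate.
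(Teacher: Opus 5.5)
Your proposal is correct and is exactly the argument the paper intends: it gives no separate proof and simply notes that the explicit upper bound for $C_{m,n,\varepsilon}$ in Theorem~\ref{thm:sympower-diagonal1} yields the corollary. Assuming twist-inequivalence, the set has natural density $C_{m,n,\varepsilon}\le \frac{32}{\pi^2}m(2\varepsilon)^{1/m}$, which contradicts the hypothesis; your remarks on the notion of density and on the finitely many primes dividing $NN'$ are accurate and harmless.
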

Taking $\varepsilon=1/50$ and $m=n=1$, we obtain that if the set $ \{p:\ |a(p)-a'(p)|<1/50\}$ has density exceeding $13/100$, then $f$ and $f'$ are twist-equivalent. Compared with \cite[Theorem 3]{matomaki1}, our result lowers the density bound from $15/31$ to $12/100,$ and moreover extends the result to coefficients at arbitrary prime powers.

The asymptotic formula obtained in Theorem \ref{thm:sympower-diagonal1} also lead to effective results concerning the least prime at which the Fourier coefficients are either very close or sparse.  Since both the main term and the error term in Theorem \ref{thm:sympower-diagonal1} are explicit, comparing them yields the following effective least-prime result.
\begin{corollary}\label{cor: least prime results}
Let $f$ and $f'$ be twist-inequivalent non-CM newforms, and  $\varepsilon<1$. Then there exist effectively computable constants $C_i=C_i(m,n,\varepsilon)>0$ and primes $p_i$, $i=1,2$ such that
$$
p_i<(kk'NN')^{C_i\log(kk'NN')}\quad \text{satisfy}\quad |a(p_1^m)-a'(p_1^n)|<\varepsilon \quad \text{and} \quad |a(p_2^m)-a'(p_2^n)|>\varepsilon.
$$
\end{corollary}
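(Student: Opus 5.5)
We deduce Corollary \ref{cor: least prime results} from Theorem \ref{thm:sympower-diagonal1} by comparing main term and error term. Write $Q=kk'NN'$. For $p_1$, Theorem \ref{thm:sympower-diagonal1} with $\varepsilon<1$ gives
$$
\#\{p\le x: |a(p^m)-a'(p^n)|<\varepsilon\}\ \ge\ C_{m,n,\varepsilon}\pi(x)-A\,\frac{\pi(x)(\log(Q\log x))^{1/4}}{(\log x)^{1/8}},
$$
with $A=A(m,n)$ effective. (If $m>n$, interchange the roles of $f,f'$ and $m,n$; the lower bound for the constant holds after this relabelling.) Here $C_{m,n,\varepsilon}\ge c_1:=\frac{(2n-1)\varepsilon}{144\pi(m+2)(n+1)}>0$, explicitly. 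It suffices to find $x$ with
$$
A(\log(Q\log x))^{1/4}<c_1(\log x)^{1/8}, \quad\text{i.e.}\quad \log Q+\log\log x< (c_1/A)^4(\log x)^{1/2}.
$$

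Take $\log x=C\,(\log Q)^2$ with $C$ large. The right side is $(c_1/A)^4 C^{1/2}\log Q$. The left side is $\log Q+\log C+2\log\log Q$. Since $Q\ge 2$ unless all levels are $1$ and the weights satisfy $kk'=\dots\ge 4$ (in fact $\log Q\ge\log 4$ always, as $k,k'\ge 2$), the quantities $\log C$ and $\log\log Q$ are $O(\log Q)$ with absolute constants. Choosing $C$ effectively in terms of $c_1$ and $A$, hence in terms of $m,n,\varepsilon$, makes the inequality hold. The count is then positive, giving a prime $p_1\le x=Q^{C\log Q}$. Taking $C_1$ slightly larger yields the strict inequality.

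For $p_2$, apply the same argument to the complementary count. We want $\pi(x)-\#\{p\le x: |a(p^m)-a'(p^n)|\le\varepsilon\}>0$. The theorem concerns the strict inequality $<\varepsilon$, so we apply it with $\varepsilon'=(1+\varepsilon)/2\in(\varepsilon,1)$. Then $\{|\cdot|\le\varepsilon\}\subset\{|\cdot|<\varepsilon'\}$, and the number of $p\le x$ with $|a(p^m)-a'(p^n)|>\varepsilon$ is at least
$$
(1-C_{m,n,\varepsilon'})\pi(x)-A'\,\frac{\pi(x)(\log(Q\log x))^{1/4}}{(\log x)^{1/8}}.
$$
The key point is that $1-C_{m,n,\varepsilon'}\ge c_2>0$, with $c_2$ effective. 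The upper bound $\frac{32}{\pi^2}m(2\varepsilon')^{1/m}$ need not be below $1$, so it does not suffice directly. Instead, $1-C_{m,n,\varepsilon'}$ is the joint Sato--Tate measure of $\{|U_m(u/2)-U_n(v/2)|\ge\varepsilon'\}$. This region contains an explicit box: near $u=2$ we have $U_m(1)=m+1\ge 2$, and near $v=0$ we have $|U_n(0)|\le 1$, so the difference exceeds $1>\varepsilon'$ on a box of size depending only on $m,n$ (by continuity, with explicit derivative bounds for Chebyshev polynomials). Because the Sato--Tate density is positive away from the boundary, this box has explicit positive measure $c_2(m,n)$. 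The same choice of $x$ then produces $p_2$, and we take $C=\max(C_1,C_2)$ if a uniform constant is desired.

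The only genuine obstacle is this effective positivity of the complementary measure. Everything else is bookkeeping showing that $x=Q^{C\log Q}$ beats the $(\log x)^{-1/8}$ saving.
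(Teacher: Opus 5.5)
Your proof is correct and follows the same route as the paper. As there, you compare the main term $C_{m,n,\varepsilon}\pi(x)$ of Theorem \ref{thm:sympower-diagonal1} with its error term and take $\log x\asymp(\log kk'NN')^2$; your auxiliary $\varepsilon'=(1+\varepsilon)/2$ and your explicit region where $|U_m(u/2)-U_n(v/2)|\ge\varepsilon'$ also supply two points the paper passes over, namely the primes with $|a(p^m)-a'(p^n)|=\varepsilon$ and an effective positive lower bound for $1-C_{m,n,\varepsilon}$.

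One small slip: at $(2,0)$ the difference can equal exactly $1$ (e.g.\ $m=1$, $n=4$), so your box there also depends on $1-\varepsilon'$. This is harmless, since $C_2$ may depend on $\varepsilon$, or you can avoid it by centring the box at $(2,2\cos\frac{\pi}{n+1})$, where the difference is $m+1\ge 2$.
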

The constants $C_1$ and $C_2$ appearing above can be made explicit using 
bounds of $C_{m,n,\varepsilon}$ given in Theorem \ref{thm:sympower-diagonal1}  for small $\e.$
 We also remark that the large difference result $|a(p^m)-a'(p^n)|>\varepsilon$ should be compared with \cite[Proposition 1]{munshi10}, where related questions are studied using Rankin--Selberg theory in the context of determining modular forms through central values of $L$-functions.

By the symmetry of the joint Sato--Tate measure, the same conclusions remain valid if the difference
$
a(p^m)-a'(p^n)
$
is replaced by the sum
$
a(p^m)+a'(p^n),
$
and will not be stated separately.

We next study the primes for which the product $|a(p^m)a'(p^n)|$ is smaller than a fixes positive constant, which also detects the primes for which at least one of the values $|a(p^m)|$ or $|a'(p^n)|$ is small.  Related questions on the occurrence of small Fourier coefficients of a newform have been investigated extensively in the literature (see, for example, \cite{atkser,gaf}). We have the following result in which, as before, the main term is given by the joint Sato--Tate measure of a naturally associated semi-algebraic region, and we obtain explicit upper and lower bounds for the corresponding density.
\begin{theorem}\label{thm: sympower product}
    Let $f \in S_k(N)$ and $f'\in S_{k'}(N')$ be twist-inequivalent non-CM newforms.
Then for any $\e>0$, and positive integers $m,n$, we have
 $${\#\left\{p\le x:
\left|a(p^m)a'(p^n)\right|< \varepsilon
\right\}
=  
D_{m,n,\varepsilon} \pi(x) + O\!\left(
\frac{\pi(x) (\log(kk'NN' \log x))^{1/4}}{(\log x)^{1/8}}
\right)}.$$ 
Moreover, if $\e<1$, then 
$$
\frac{64mn}{9\pi^4}\left(1- \frac{\sqrt{\e}}{m+1}\right) \left(1- \frac{\sqrt{\e}}{n+1}\right) \left(\frac{\e}{(m+1)(n+1)}\right) \le 
D_{m,n,\varepsilon}  \le \frac{32}{\pi^2} \left(m \e^{1/(2m)}+n \e^{1/(2n)} \right). 
$$
\end{theorem}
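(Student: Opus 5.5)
We follow the same scheme as for Theorem~\ref{thm:sympower-diagonal1}. Write $a(p)=2\cos\theta_p$ and $a'(p)=2\cos\theta'_p$. For $p\nmid NN'$ we have $a(p^m)=U_m(\cos\theta_p)=\sin((m+1)\theta_p)/\sin\theta_p$, and similarly for $a'(p^n)$. The primes dividing $NN'$ contribute only $O(\log(NN'))$, which is absorbed in the error term. So the count equals the number of $p\le x$ with $(\theta_p,\theta'_p)\in R_\varepsilon$, where
$$R_\varepsilon=\{(\theta,\theta')\in[0,\pi]^2:\ |U_m(\cos\theta)U_n(\cos\theta')|<\varepsilon\}.$$
We then invoke the effective joint Sato--Tate theorem of \cite{kkm2} for twist-inequivalent non-CM newforms. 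It gives the asymptotic with main term $\mu_{ST}\otimes\mu_{ST}(R_\varepsilon)\pi(x)$ and the stated error, provided the boundary of $R_\varepsilon$ is controlled. Here $R_\varepsilon$ is a semi-algebraic set, namely $|P(u,v)|<\varepsilon$ for the polynomial $P(u,v)=U_m(u/2)U_n(v/2)$. Its boundary is a finite union of real algebraic curves of degree at most $m+n$, so the boundary of a $\delta$-neighbourhood has measure $O_{m,n}(\delta)$. This is exactly the regularity used to pass from smooth test functions, or from rectangles, to $R_\varepsilon$ in the proof of Theorem~\ref{thm:sympower-diagonal1}. I would reuse that step verbatim. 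We set $D_{m,n,\varepsilon}=\mu_{ST}\otimes\mu_{ST}(R_\varepsilon)$.

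For the upper bound, note that $|xy|<\varepsilon$ forces $|x|<\sqrt\varepsilon$ or $|y|<\sqrt\varepsilon$. Hence
$$D_{m,n,\varepsilon}\le \mu_{ST}\{|U_m|<\sqrt\varepsilon\}+\mu_{ST}\{|U_n|<\sqrt\varepsilon\}.$$
It then suffices to prove the one-variable estimate $\mu_{ST}\{\theta:|U_m(\cos\theta)|<\eta\}\le \frac{32}{\pi^2}m\,\eta^{1/m}$ for $\eta<1$. Presumably this estimate was already established for Theorem~\ref{thm:sympower-diagonal1}, since the bound there has the form $m(2\varepsilon)^{1/m}$. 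Applying it with $\eta=\sqrt\varepsilon$ gives $m\varepsilon^{1/(2m)}$, as claimed. If it must be proved afresh, we argue as follows. The set $\{|U_m|<\eta\}$ lies in the union of $m$ intervals around the roots $\cos(j\pi/(m+1))$ of $U_m$. On each such interval, a polynomial of degree $m$ with leading coefficient $2^m$ satisfies a Remez/Cartan-type estimate: its sublevel set $\{|P|<\eta\}$ has length $\le 4(\eta/2^m)^{1/m}$. Multiplying by the density bound $2/\pi$ and summing over the roots gives the bound up to the constant. Tracking the constant $32/\pi^2$ is routine.

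For the lower bound, we use the product structure of $R_\varepsilon$. It contains the rectangle $\{|U_m(\cos\theta)|<\sqrt{\varepsilon}\}\times\{|U_n(\cos\theta')|<\sqrt\varepsilon\}$, so $D\ge A_mA_n$ with $A_m=\mu_{ST}\{|U_m|<\sqrt\varepsilon\}$. To bound $A_m$ from below, take the root $\theta_0=\pi/(m+1)$ or a root near $\pi/2$, so that $\sin\theta$ is bounded below. Near $\theta_0$ we have $|\sin((m+1)\theta)|\le (m+1)|\theta-\theta_0|$. It is therefore enough to take $|\theta-\theta_0|\le \sqrt\varepsilon\sin\theta/(m+1)$, restricted to where $\sin\theta$ is not too small. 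Summing over the $m$ roots $j\pi/(m+1)$ with $\sin\theta_j$ comparable to $1$ (using only roots in the middle range, or weighting by $\sin^2$) gives $A_m\ge \frac{8m}{3\pi^2}\cdot\frac{\sqrt\varepsilon}{m+1}\bigl(1-\frac{\sqrt\varepsilon}{m+1}\bigr)$. The factor $1-\sqrt\varepsilon/(m+1)$ comes from the shrinkage of $\sin$ over the interval, and the remaining constants match $\frac{64mn}{9\pi^4}$ after squaring. \textbf{The main obstacle} is getting these explicit constants, specifically the uniform lower estimate of $\sin^2\theta$ near the roots combined with the linearization of $\sin((m+1)\theta)$. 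I would first try taking, around each root, an interval of half-length $\sqrt\varepsilon/(m+1)$ times $\sin\theta_j/2$, and use concavity of $\sin$ to bound both $|U_m|$ and the Sato--Tate density from below.
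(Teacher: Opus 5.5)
Your argument for the asymptotic formula and for the upper bound is the paper's. The count is Theorem~\ref{thm:main2} applied to $P(u,v)=U_m(u/2)U_n(v/2)$ and $I=(-\varepsilon,\varepsilon)$, with the degree entering only through $(m+n)^8$. The upper bound is your $\sqrt\varepsilon$ union bound followed by Proposition~\ref{thm: margulis} with $\lVert U_m(u/2)\rVert_{[-2,2]}=m+1$; your P\'olya-type sublevel estimate would serve equally well. Your lower-bound setup also matches the paper: the rectangle $\{|U_m|<\sqrt\varepsilon\}\times\{|U_n|<\sqrt\varepsilon\}$ and intervals of half-length $\delta_j=\sqrt\varepsilon\sin\theta_j/(2(m+1))$ around $\theta_j=j\pi/(m+1)$.

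The gap is that the constant $\frac{8m}{3\pi^2}$ in your bound for $A_m$ is asserted, not derived, and the shortcut you propose cannot produce it. On each such interval, $\sin\theta>(1-\frac{\sqrt\varepsilon}{2(m+1)})\sin\theta_j$ and $|\sin((m+1)\theta)|<(m+1)\delta_j$, so $|U_m(\cos\theta)|<\sqrt\varepsilon$. Summing the contributions gives
\[
A_m\ \ge\ \frac{2}{\pi}\Bigl(1-\frac{\sqrt\varepsilon}{2(m+1)}\Bigr)^2\frac{\sqrt\varepsilon}{m+1}\sum_{j=1}^{m}\sin^3\Bigl(\frac{j\pi}{m+1}\Bigr).
\]
Your claimed bound is therefore equivalent to $\sum_{j=1}^{m}\sin^3(j\pi/(m+1))\ge\frac{4m}{3\pi}$. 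This is sharp to leading order: the sum is a Riemann sum for $\frac{m+1}{\pi}\int_0^\pi\sin^3$, hence asymptotic to $\frac{4(m+1)}{3\pi}$. Consequently, keeping only roots where $\sin\theta_j$ is bounded below by a constant loses a fixed factor and cannot yield $\frac{64mn}{9\pi^4}$.

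The paper closes this with an exact evaluation. Write $\sin^3\psi=\frac{3\sin\psi-\sin 3\psi}{4}$ and use $\sum_{j=1}^{\ell}\sin(j\psi)=\frac{\sin(\ell\psi/2)\sin((\ell+1)\psi/2)}{\sin(\psi/2)}$. With $\alpha=\frac{\pi}{2(m+1)}$ this gives
\[
\sum_{j=1}^{m}\sin^3\Bigl(\frac{j\pi}{m+1}\Bigr)=\frac34\cot\alpha-\frac14\cot 3\alpha=\frac{2\cot^3\alpha}{3\cot^2\alpha-1}\ \ge\ \frac23\cot\alpha .
\]
Then $\cot\alpha>\frac1\alpha-\frac\alpha2$ gives the bound $\frac{4m}{3\pi}$. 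Finally, $(1-\frac{\sqrt\varepsilon}{2(m+1)})^2\ge 1-\frac{\sqrt\varepsilon}{m+1}$ puts the result in the stated form. Without this computation, or an equally precise Riemann-sum estimate, the explicit lower bound in the theorem is not established.
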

Note that Theorem~\ref{thm: sympower product} also yields analogues of Corollaries~\ref{cor:difference} and~\ref{cor: least prime results}, with the difference of Fourier coefficients replaced by their product. Indeed, the explicit upper bound for $D_{m,n,\varepsilon}$ gives a multiplicity one criterion for products of Fourier coefficients analogous to Corollary~\ref{cor:difference}. Moreover, applying the same argument as in the proof of Corollary~\ref{cor: least prime results} together with Theorem~\ref{thm: sympower product}, one obtains explicit bounds for the least prime $p$ satisfying either of the inequalities
$$
|a(p^m)a'(p^n)|<\varepsilon,
\qquad
|a(p^m)a'(p^n)|>\varepsilon.
$$

Theorems~\ref{thm:sympower-diagonal1} and~\ref{thm: sympower product}
show that the prescribed inequalities hold for a positive density of primes when  $\varepsilon$ is fixed. It is natural to ask what happens when $\varepsilon$ is allowed to depend on the prime $p$ and tends to zero as $p\to\infty$. This question is closely related to the philosophy underlying the Atkin–Serre conjecture \cite{atkser}. Let $f\in S_k(N)$ be a non-CM newform of weight $k\ge 4$. The Atkin--Serre conjecture predicts that, for every $\varepsilon>0$, there exists a constant $c_{\varepsilon,f}>0$ such that
$$
|a(p)|\ge c_{\varepsilon,f}p^{-(1+\varepsilon)}
$$
for all sufficiently large primes $p$. 
Although the conjecture remains open, Gafni, Thorner and Wong~\cite[Theorem~1.1]{gaf} established a remarkable density-one analogue by proving that for a set of primes of density one
$$
|a(p)|
\ge
\frac{2\log\log p}
{\sqrt{\log p}}.
$$
In the same spirit, we next study the distribution of primes for which the size of the differences and products of the Fourier coefficients of two newforms decreases as the prime varies. The following theorem shows that if the rate of decrease is
$$
\frac{(\log\log p)^{1/4}}
{(\log p)^{1/8}},
$$
then the corresponding inequalities are satisfied by only a density-zero set of primes.
\begin{theorem}\label{Atkin}
    Let $f\in S_k(N)$ and $f'\in S_k'(N')$ be twist-inequivalent non-CM newforms, and $m\le n$ be positive integers.
Then as $x\to\infty$
\begin{enumerate}
    \item 
$\displaystyle{\!
\# \!\left\{x< p\le 2 x : 
|a(p^m)-a'(p^n)|\!<\! \frac{4(\log  (\log p))^{1/4}}{(\log p)^{1/8}}
\right\}
\!=\!
O\!\left(
\frac{\pi(x)  (\log(kk'NN' \log x))^{1/4m}}{(\log x)^{1/8m}}
\right)}$, \\
\item 
$\displaystyle{
    \#\left\{x< p\le 2 x:
|a(p^m)a'(p^n)| < \frac{4(\log  (\log p))^{1/4}}{(\log p)^{1/8}}
\right\}
\!=\!
O\!\left(
\frac{\pi(x)  (\log(kk'NN' \log x))^{1/8n}}{(\log x)^{1/16n}}
\right)}.
$ 
\end{enumerate}
 The implied constants in both cases depend only on $m$ and $n$.  
\end{theorem}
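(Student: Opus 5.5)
Since the threshold $\delta(p)=4(\log\log p)^{1/4}/(\log p)^{1/8}$ is decreasing in $p$, every prime $x<p\le 2x$ in either set also satisfies the same inequality with $\delta(p)$ replaced by the fixed constant $\delta_x:=\delta(x)$ (for $x$ large). So it suffices to bound
$$\#\{p\le 2x: |a(p^m)-a'(p^n)|<\delta_x\}\quad\text{and}\quad \#\{p\le 2x: |a(p^m)a'(p^n)|<\delta_x\}.$$
Theorems~\ref{thm:sympower-diagonal1} and \ref{thm: sympower product} apply with $\varepsilon=\delta_x$, provided the implied constants there are uniform in $\varepsilon$. They depend only on $m,n$, because they come from the effective joint Sato--Tate theorem of \cite{kkm2} applied to the regions $\{|U_m(u/2)-U_n(v/2)|<\varepsilon\}$ and $\{|U_m(u/2)U_n(v/2)|<\varepsilon\}$. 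I would check this uniformity in the proofs: the discrepancy bound is applied to semi-algebraic regions of bounded complexity in terms of $m,n$, which gives an error independent of $\varepsilon$. Then the count is
$$C_{m,n,\delta_x}\pi(2x)+O\!\left(\frac{\pi(x)(\log(kk'NN'\log x))^{1/4}}{(\log x)^{1/8}}\right),$$
and similarly with $D_{m,n,\delta_x}$.

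Next I use the explicit upper bounds, valid since $\delta_x<1$ for large $x$:
$$C_{m,n,\delta_x}\le \tfrac{32}{\pi^2}m(2\delta_x)^{1/m}\ll_m \frac{(\log\log x)^{1/(4m)}}{(\log x)^{1/(8m)}},$$
$$D_{m,n,\delta_x}\le \tfrac{32}{\pi^2}\bigl(m\delta_x^{1/(2m)}+n\delta_x^{1/(2n)}\bigr)\ll_{m,n}\frac{(\log\log x)^{1/(8n)}}{(\log x)^{1/(16n)}},$$
where the last step uses $m\le n$, so that $\delta_x^{1/(2n)}$ dominates. Since $\log\log x\le\log(kk'NN'\log x)$, the main terms are bounded by the claimed right-hand sides. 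The error terms carry exponent $1/8$ on $\log x$, which is at least $1/(8m)$ and at least $1/(16n)$. Since $\log(kk'NN'\log x)/\log x$ may be assumed $\le 1$ (otherwise the bound is trivial from $\pi(x)$), we get $\bigl(L/\log x\bigr)^{1/8}\le\bigl(L/\log x\bigr)^{1/(8m)}$ with $L=\log(kk'NN'\log x)$. Note that $L^{1/4}/(\log x)^{1/8}=(L^2/\log x)^{1/8}$, so I should compare the bases carefully. It is cleaner to bound the error as $\pi(x)(L^2/\log x)^{1/8}$ and the target as $(L^2/\log x)^{1/(8m)}$; these match exactly, because the target is $L^{1/4m}/(\log x)^{1/8m}$. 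The same works for the product with exponent $1/(16n)$. We may assume $L^2\le\log x$, since otherwise the claimed bound exceeds $\pi(x)$ and is trivial.

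The main obstacle is justifying that the error terms of Theorems~\ref{thm:sympower-diagonal1} and \ref{thm: sympower product} are uniform in $\varepsilon$. If they are not, I would redo the argument directly. I would approximate the indicator of the region by Beurling--Selberg type majorants that are polynomial in Chebyshev polynomials $U_i(u/2)U_j(v/2)$ of degree at most $M$, and apply the effective joint Sato--Tate bound from \cite{kkm2} for $\sum_p U_i(a(p)/2)U_j(a'(p)/2)$. The extra error from smoothing the boundary is $O(1/M)$ times a boundary-length factor depending only on $m,n$. Choosing $M\asymp(\log x/L^2)^{1/8}$ as in the original proof then gives an error independent of $\varepsilon$.
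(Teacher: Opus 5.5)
Your proposal is correct and follows essentially the same route as the paper: bound the decreasing threshold on $(x,2x]$ by a fixed one, apply Theorems~\ref{thm:sympower-diagonal1} and~\ref{thm: sympower product} (whose error terms are already uniform in $\varepsilon$, since Theorem~\ref{thm:main2} depends only on the degree of the polynomial), and insert the explicit upper bounds for $C_{m,n,\varepsilon}$ and $D_{m,n,\varepsilon}$. The only cosmetic difference is the choice of threshold. The paper uses the larger $M(x)=4(\log(kk'NN'\log x))^{1/4}/(\log x)^{1/8}$, so that the error term $O(\pi(x)M(x))$ is automatically dominated by $M(x)^{1/m}$. You instead use $\delta(x)$ and compare the error term with the target via $(L^2/\log x)^{1/8}\le (L^2/\log x)^{1/(8m)}$, which is equally valid. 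Your Beurling--Selberg fallback is not needed.
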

As an immediate consequence, we obtain a density-one statement which may be viewed as a two form analogue of the theorem of Gafni, Thorner and Wong \cite[Theorem 1.1]{gaf}. 
\begin{corollary}\label{Atkin-Serre}
    Let $f$ and $f'$ be twist-inequivalent non-CM newforms and $m, n$ be positive integers. Then  for a set of primes $p$ of density one, we have 
$$ |a(p^m)-a'(p^n)|\ge \frac{4(\log (\log p))^{1/4}}{(\log p)^{1/8}} \quad and \quad 
|a(p^m)a'(p^n)|\ge \frac{4(\log  (\log p))^{1/4}}{(\log p)^{1/8}}.$$
\end{corollary}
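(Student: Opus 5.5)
We deduce Corollary~\ref{Atkin-Serre} from Theorem~\ref{Atkin} by a dyadic decomposition. Put
$$
\delta(p)=\frac{4(\log\log p)^{1/4}}{(\log p)^{1/8}}
$$
and let $E$ be the set of primes $p$ with $|a(p^m)-a'(p^n)|<\delta(p)$ or $|a(p^m)a'(p^n)|<\delta(p)$. We must show that $\#\{p\le x: p\in E\}=o(\pi(x))$. The corollary then holds for the complementary set of primes, which has density one.

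First we reduce to the case $m\le n$, which is what Theorem~\ref{Atkin} requires. If $m>n$, swap the roles of $f$ and $f'$ and of $m$ and $n$. The conditions are symmetric up to sign, since $|a(p^m)-a'(p^n)|=|a'(p^n)-a(p^m)|$ and the product is symmetric. Twist-inequivalence and the non-CM hypothesis are also symmetric. So we may assume $m\le n$.

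Next, for fixed $f,f'$ the quantity $\log(kk'NN'\log x)$ is $\ll \log\log x$. Parts (1) and (2) of Theorem~\ref{Atkin}, with a union bound, therefore give
$$
\#\{x<p\le 2x: p\in E\}\ll \pi(x)\,\eta(x),
\qquad
\eta(x)=\frac{(\log\log x)^{1/(8n)}}{(\log x)^{1/(16n)}}+\frac{(\log\log x)^{1/(4m)}}{(\log x)^{1/(8m)}},
$$
where $\eta(x)\to0$ and is eventually decreasing.

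Now sum dyadically. Fix a large $y_0$ and write $(y_0,x]$ as a union of intervals $(x/2^{j+1},x/2^j]$ for $0\le j\le J$, with $x/2^{J+1}\le y_0$. Then
$$
\#\{p\le x:p\in E\}\le \pi(y_0)+\sum_{j}\pi(x/2^{j+1})\,\eta(x/2^{j+1}).
$$
Split the sum at $x/2^{j}=\sqrt{x}$:
\begin{itemize}
\item For blocks with $x/2^{j+1}\ge\sqrt x$, we have $\eta\le\eta(\sqrt x)\to0$. Since $\pi(x/2^{j+1})\ll 2^{-j}\pi(x)$ in this range, these blocks sum to $\ll \pi(x)\eta(\sqrt x)$.
\item The remaining blocks involve only primes up to about $\sqrt x$, so they contribute at most $\pi(\sqrt x)=o(\pi(x))$.
\end{itemize}
Together this gives $\#\{p\le x:p\in E\}=o(\pi(x))$, which is the claim.

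The only delicate point is that the threshold $\delta(p)$ depends on $p$ itself rather than on $x$. This is already built into the statement of Theorem~\ref{Atkin}, whose sets are defined using $\delta(p)$ on $x<p\le 2x$, so no monotonicity argument is needed. Beyond that, the main care is checking that the $N,N',k,k'$ dependence is harmless for fixed forms, and that the error bound is uniform enough to sum dyadically. The implied constants depend only on $m,n$, so this holds.
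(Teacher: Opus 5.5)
Your proposal is correct and takes the same route as the paper. The paper presents this corollary as an immediate consequence of Theorem~\ref{Atkin}, and your steps are exactly the routine details it leaves implicit: the reduction to $m\le n$ by swapping $f$ and $f'$, the union bound over parts (1) and (2), the dyadic summation over $(\sqrt{x},x]$, and the trivial bound $\pi(2\sqrt{x})=o(\pi(x))$ for the remaining primes, including your choice of a large $y_0$ so that Theorem~\ref{Atkin} is only applied for sufficiently large $x$.
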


Our final result concerns the global distribution of the ratios of Fourier coefficients $
\frac{a(p^m)}{a'(p^n)}, 
$ whenever $a'(p^n)\neq 0$. Unlike differences and products, ratios measure the relative behavior of the coefficients and lead to a remarkably rich distribution theorem. 
The following theorem shows that these ratios are distributed throughout the real line in a quantitative sense.
\begin{theorem}\label{thm:density-ratio}
Let $f\in S_k(N)$ and $f'\in S_k'(N')$ be twist-inequivalent non-CM newforms, and $m,n$ be positive integers. Then for every $t\in \R$ and $\varepsilon>0$, there exists a positive, computable constant $ F_{m,n,\e}^{t}$ such that
$$
\#\left\{
p\le x :
a'(p^n)\neq 0,\,
\left|\frac{a(p^m)}{a'(p^n)}-t\right|<\varepsilon
\right\} =  F_{m,n,\e}^{t} \pi(x)+ O\!\left(
\frac{\pi(x) (\log(kk'NN' \log x))^{1/4}}{(\log x)^{1/8}}
\right).
$$
\end{theorem}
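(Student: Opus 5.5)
We reduce Theorem~\ref{thm:density-ratio} to the effective joint Sato--Tate theorem of \cite{kkm2}, as for Theorems~\ref{thm:sympower-diagonal1} and~\ref{thm: sympower product}. For $p\nmid NN'$ we have $a(p^m)=U_m(a(p)/2)$ and $a'(p^n)=U_n(a'(p)/2)$. The primes dividing $NN'$ contribute $O(\log(NN'))$, which is absorbed by the error term. The counted set therefore equals, up to this error, the set of primes with $(a(p),a'(p))\in R_t$, where
$$
R_t=\left\{(u,v)\in[-2,2]^2:\ U_n\!\left(\tfrac v2\right)\neq 0,\ \left|U_m\!\left(\tfrac u2\right)-tU_n\!\left(\tfrac v2\right)\right|<\varepsilon\left|U_n\!\left(\tfrac v2\right)\right|\right\}.
$$
This is a semi-algebraic set. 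The inequality may be squared, so that it is polynomial. The condition $U_n(v/2)\neq0$ removes only finitely many vertical lines $v=2\cos(j\pi/(n+1))$. We set $F^t_{m,n,\varepsilon}=\mu_{ST}\otimes\mu_{ST}(R_t)$.

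\textbf{Step 1: the asymptotic formula.} We apply the effective joint Sato--Tate theorem in the same form used for Theorems~\ref{thm:sympower-diagonal1} and~\ref{thm: sympower product}. It counts primes with $(a(p),a'(p))$ in a region whose boundary is a union of a bounded number of algebraic curves of bounded degree. The degrees here depend only on $m,n$, and the error is $O(\pi(x)(\log(kk'NN'\log x))^{1/4}/(\log x)^{1/8})$.

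The point requiring care is that the boundary of $R_t$ contains the vanishing lines of $U_n(v/2)$. Near these lines the region is pinched, but it stays a finite union of cells bounded by graphs of algebraic functions. We handle this by covering the boundary by $\delta$-neighbourhoods and approximating the indicator function by Beurling--Selberg-type trigonometric polynomials in each variable, exactly as in the earlier proofs. The joint Sato--Tate measure of a $\delta$-neighbourhood of a bounded-degree algebraic curve is $O_{m,n}(\delta)$, uniformly in $t$ and $\varepsilon$, since the curve has $O_{m,n}(1)$ monotone pieces. The excised lines have measure zero. Moreover, primes with $a'(p^n)=0$ that lie in a $\delta$-strip around such a line are counted within the same $O(\delta)$ error, so the exclusion $a'(p^n)\ne0$ costs nothing extra. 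Optimizing $\delta$ gives the stated error exponent $1/8$.

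\textbf{Step 2: positivity of $F^t_{m,n,\varepsilon}$.} This is the main obstacle, because it must hold for every real $t$, including $|t|$ large. The plan is to find an open subset of $R_t$:
\begin{enumerate}
\item Since $U_m(\cos\theta)=\sin((m+1)\theta)/\sin\theta$ is continuous on $[-1,1]$ with $U_m(1)=m+1$ and with zeros, its range contains $[0,m+1]$, and $U_n(v/2)$ likewise takes arbitrarily small nonzero values near its zeros.
\item Write $s=t\,U_n(v_0/2)$. Choose $v_0$ close to a simple zero of $U_n(\cdot/2)$, with $U_n(v_0/2)\neq0$, so that $|s|<m+1$. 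Then pick $u_0\in(-2,2)$ with $U_m(u_0/2)=s$ by the intermediate value theorem.
\item The point $(u_0,v_0)$ satisfies the strict inequality with left-hand side $0$ and right-hand side $\varepsilon|U_n(v_0/2)|>0$. By continuity, a small open box around it lies in $R_t$.
\end{enumerate}
The Sato--Tate density $\frac{1}{\pi^2}\sqrt{(1-u^2/4)(1-v^2/4)}$ is positive on $(-2,2)^2$, so $F^t_{m,n,\varepsilon}>0$. Computability follows because $F^t_{m,n,\varepsilon}$ is an integral of an explicit algebraic density over an explicit semi-algebraic set.
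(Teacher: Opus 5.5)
Your proof is correct. Your positivity argument is essentially the paper's, but your Step~1 takes a different and more economical route.

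\textbf{Step 1.} The paper does not square the inequality. It bounds $\partial E^t_{m,n,\varepsilon}$ by the curves $U_n(v/2)=0$ and $U_m(u/2)=(t\pm\varepsilon)U_n(v/2)$ together with the sides of the square. It then uses Lemma~5.1 of \cite{kkm2} to split this boundary into $O_{m,n}(1)$ continuous pieces. Each piece meets a line $\{u=a\}$ at most $\max(m,n)$ times, and the total length is $O_{m,n}(1)$. Finally it applies the general boundary version, Theorem~\ref{thm:main1}.

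Your squaring observation avoids all of this. Set $P(u,v)=(U_m(u/2)-tU_n(v/2))^2-\varepsilon^2U_n(v/2)^2$. The strict inequality $P<0$ cannot hold when $U_n(v/2)=0$, since it would read $U_m(u/2)^2<0$. Hence $R_t=\{P<0\}=E_{I,P}$ exactly, with $I=(-\infty,0)$ (or $(-C,0)$ for $C$ large). Theorem~\ref{thm:main2} then applies verbatim with degree at most $2\max(m,n)$. The implied constant depends only on $m,n$ and is uniform in $t,\varepsilon$.

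Consequently, your paragraph on pinching near the zero lines of $U_n$, $\delta$-neighbourhoods and Beurling--Selberg majorants is unnecessary. The paper's earlier proofs do not do this either; they simply cite Theorem~\ref{thm:main2}. Also, the lines $v=2\cos(j\pi/(n+1))$ are horizontal, not vertical, in the convention of Theorem~\ref{thm:main1}. The paper's route is flexible enough for regions not given by a single polynomial inequality; yours gives a one-line reduction.

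\textbf{Step 2.} One small repair is needed. Knowing $|s|<m+1$ and that $U_m([-1,1])\supset[0,m+1]$ does not put $s$ in the range when $s<0$ and $m$ is even. For example, $U_2(x)=4x^2-1$ has range $[-1,3]$. There are two easy fixes:
\begin{itemize}
\item choose $v_0$ on the side of the simple zero of $U_n(\cdot/2)$ where $s\ge 0$; or
\item make $|s|<1$ and use $U_m([-1,1])\supset[-1,1]$, which holds because $U_m(\cos\theta)=-1/\sin\theta\le -1$ at $\theta=3\pi/(2(m+1))$.
\end{itemize}
The paper does the second, via a scaling. It takes $0<\kappa<1/(|t|+\varepsilon+1)$ and picks $U_n(v_0/2)=\kappa$ and $U_m(u_0/2)=\kappa t$. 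The ratio then equals $t$ at $(u_0,v_0)$, and continuity gives an open ball inside $E^t_{m,n,\varepsilon}$. This is the same idea as your ``make $U_n(v_0/2)$ small and nonzero''.
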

It follows from the above theorem that every non-empty open interval of $\R$ contains ratios of Fourier coefficients for a positive density of primes.
As an application, we obtain a new multiplicity one criterion expressed purely in terms of the topological properties of the ratio set.
\begin{corollary}
Let $f$ and $f'$ be non-CM newforms, and let $m,n$ be positive integers. Let $\mathcal P$ be a set of primes of density 1. If the closure of the set
$$
\left\{
\frac{a(p^m)}{a'(p^n)}
:
p \in \mathcal P,\ a'(p^n)\neq 0
\right\}
$$
is a proper subset of $\R$, then $f$ and $f'$ are twist-equivalent.
\end{corollary}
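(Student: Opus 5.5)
We prove the corollary by contraposition: assume $f$ and $f'$ are twist-inequivalent and show that the closure of the ratio set is all of $\R$. The tool is Theorem~\ref{thm:density-ratio}, together with the fact that removing a density-zero set of primes cannot destroy a set of positive density.

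Fix $t\in\R$ and $\varepsilon>0$, and set
$$
S_{t,\varepsilon}=\left\{p:\ a'(p^n)\neq 0,\ \left|\frac{a(p^m)}{a'(p^n)}-t\right|<\varepsilon\right\}.
$$
By Theorem~\ref{thm:density-ratio},
$$
\#\{p\le x: p\in S_{t,\varepsilon}\}=F^t_{m,n,\varepsilon}\pi(x)+o(\pi(x)),
$$
since the error term is $O(\pi(x)(\log\log x)^{1/4}(\log x)^{-1/8})=o(\pi(x))$ for fixed $k,k',N,N'$. Because $F^t_{m,n,\varepsilon}>0$, the set $S_{t,\varepsilon}$ has natural density $F^t_{m,n,\varepsilon}>0$.

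Since $\mathcal P$ has density one, its complement has density zero. Therefore
$$
\#\{p\le x: p\in S_{t,\varepsilon}\cap\mathcal P\}\ge \#\{p\le x: p\in S_{t,\varepsilon}\}-\#\{p\le x: p\notin\mathcal P\}=F^t_{m,n,\varepsilon}\pi(x)+o(\pi(x)).
$$
In particular, $S_{t,\varepsilon}\cap\mathcal P$ is nonempty, and in fact infinite. So there is a prime $p\in\mathcal P$ with $a'(p^n)\neq0$ and $|a(p^m)/a'(p^n)-t|<\varepsilon$. Since $t$ and $\varepsilon$ were arbitrary, every point of $\R$ lies in the closure of the ratio set. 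That closure is therefore $\R$, not a proper subset, which proves the contrapositive.

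No step here is really an obstacle; the substance is in Theorem~\ref{thm:density-ratio}. The only detail to check is that its positive constant $F^t_{m,n,\varepsilon}$ dominates the $o(\pi(x))$ error uniformly enough to give positive density for each fixed $(t,\varepsilon)$, and it does, since the error term is independent of $t$ and tends to zero relative to $\pi(x)$. We also use that "density" in the hypothesis on $\mathcal P$ means natural density, consistent with the rest of the paper. If it meant another notion, such as analytic density, the same argument works because positive natural density implies positive lower density in those senses as well.
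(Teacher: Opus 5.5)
Your proof is correct and is essentially the paper's own argument. The paper deduces the corollary directly from Theorem~\ref{thm:density-ratio}: for twist-inequivalent forms, every open interval contains ratios for a positive-density set of primes, that set must meet any density-one set $\mathcal P$, and so the closure is all of $\R$. (The uniformity-in-$t$ remark is harmless but unnecessary, since each $(t,\varepsilon)$ is fixed.)
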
 
Hence, for twist-inequivalent newforms, the set 
$\{\frac{a(p^m)}{a'(p^n)}: p~prime, a'(p^n)\neq 0\}$ is dense in $\R$.

\textbf{Outline of the paper.}
The paper is organized as follows. In Section \ref{sec:preliminaries}, we recall an effective joint Sato--Tate theorem and a quantitative estimate of Kleinbock--Margulis \cite{margulis} for the measure of the set on which a polynomial assumes small values. Sections \ref{sec: proof of sympower diagonal} and \ref{sec: proof of sympower product} contain the proofs of Theorems \ref{thm:sympower-diagonal1} and \ref{thm: sympower product}. Section \ref{sec: proof of atkin} is devoted to the proof of the Atkin--Serre type result, while Section \ref{sec: proof of least prime} establishes the least-prime results. We present the proof of Theorem \ref{thm:density-ratio} in Section \ref{sec: proof of density in R}. The final section contains the concluding remarks. 

\textbf{Notation.}
Throughout this article, $f$ and $f'$ denote two normalized non-CM newforms, $p$ denotes a prime number, and  $\ell, m, n$  denote positive integers.
For any set $S$, we write $\#S$ for its cardinality, and $\mathbbm{1}_S$ to denote its indicator function. Let $\pi(x) = \#\{p: p\le x\}$. For any set $\mathcal P$ consisting of primes, we define the (natural) density of $\mathcal{P}$ as $\lim_ {x\to \infty}\frac{\#(\mathcal{P}\cap \{p\le x\})}{\pi(x)}$, whenever the limit exists.
Given a function $F$ and a positive function $G$, the notation $F = O(G)$ (or equivalently $F \ll G$) means that there exists a positive constant $c$ such that $|F| \le c\,G$ in the relevant range. For a set $E$, we denote its boundary by $\partial E$. We call a subset $E$ of $\R^2$ a semi-algebraic region if it can be written as the intersection of finitely many polynomial inequalities.

 \setcounter{theorem}{0}
\numberwithin{theorem}{section}
\section{ Preliminaries}\label{sec:preliminaries}
In this section, we collect some results that will be used throughout the paper.
\subsection{Effective joint Sato--Tate distribution and important results}
Let $$f(z)=\sum_{n\ge 1}a(n)n^{\frac{k-1}{2}}e^{2 \pi i n  z} \in S_{k}(N)\quad {\text{and}}\quad f'(z)=\sum_{n\ge 1}a'(n)n^{\frac{k'-1}{2}}e^{2 \pi i nz}\in S_{k'}(N')$$
be two normalized non-CM newforms that are twist-inequivalent. By joint Sato--Tate theorem  \cite{wong}, the pair sequence $(a(p),a'(p))$ is equidistributed in $[-2,2]^2$ with respect to the joint Sato--Tate measure
$$d\mu_{\rm JST}=\frac{1}{\pi^2}\sqrt{1-\frac{u^2}{4}}\sqrt{1-\frac{v^2}{4}}\,dudv.$$ 
More precisely, if $E\subset [-2,2]^2$ is a measurable set whose boundary has the Lebesgue measure zero, then
$$
        \#\{ p \le x : (a(p),a'(p))\in E  \} \sim \mu_{\rm JST}(E) \pi(x), \quad {\text{as } } x\to\infty.
$$

Recently, the authors of this article, building on earlier works of Thorner \cite{thorner2} and Chen--Shen \cite{Shen}, established the following effective joint Sato--Tate theorem for any measurable subset $E$ of $[-2,2]^2$ whose boundary is a finite union of continuous curves of finite length. 
 \begin{theorem}[{\cite[Theorem 1.3]{kkm2}}]\label{thm:main1}
  Let $f\in S_k(N)$ and $f'\in S_k'(N')$ be  twist-inequivalent non-CM newforms, 
 and  $E \subseteq [-2, 2]^2$ be a Borel measurable set such that 
 $$
    \partial E = \gamma_1 \cup \gamma_2 \cup\cdots \cup \gamma_\alpha,
 $$ 
 with $ L \coloneqq {\rm length}(\partial E)$, and each  $\gamma_i$ is either a vertical line or a continuous curve with $$\#\{ \gamma_i \cap \{ (u,v)\in [-2,2]^2 : u=a\} \} \le \beta, \quad \text{for some  fixed } \beta>0.$$
 Then
\begin{equation*}\label{eq:main2}
     \#\{p\le x: (a(p),a'(p))\in E\}=\mu_{\rm JST}(E)\pi(x)+ O\!\left(
\frac{ L \alpha \beta    \pi(x) (\log(kk'NN' \log x))^{1/4}}{(\log x)^{1/8}}
\right).
    \end{equation*}
    Here, the implied constant is positive, absolute, and effectively computable. 
 \end{theorem}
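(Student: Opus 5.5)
The plan is to reduce the statement to an effective joint Sato--Tate theorem for rectangles $I\times J\subseteq[-2,2]^2$, which we take as the input from the works of Thorner \cite{thorner2} and Chen--Shen \cite{Shen}. That input is roughly
$$
\#\{p\le x:(a(p),a'(p))\in I\times J\}=\mu_{\rm JST}(I\times J)\pi(x)+O\big(\pi(x)R(x)\big),\qquad R(x)=\frac{(\log(kk'NN'\log x))^{1/2}}{(\log x)^{1/4}},
$$
with an absolute implied constant. I expect this exponent pair from the following proof. One approximates $\mathbbm{1}_I\otimes\mathbbm{1}_J$ from above and below by Beurling--Selberg type trigonometric (equivalently Chebyshev) polynomials of degree $M$ in each variable. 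The two-variable Chebyshev coefficients $U_i(u/2)U_j(v/2)$ then correspond to the Rankin--Selberg $L$-functions $L(s,\mathrm{Sym}^i f\times \mathrm{Sym}^j f')$. These are automorphic and nonvanishing on the edge of the critical strip because $f,f'$ are twist-inequivalent and non-CM. A zero-free region and a prime number theorem for them, with polynomial dependence on $i,j$ and on the analytic conductor, then give error $1/M$ plus a term growing with $M$; optimizing $M$ yields $R(x)$. I will simply assume this box version, since the paper builds on it.

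\textbf{Step 1 (slicing into columns).} Fix a parameter $\delta\in(0,1)$ and cut $[-2,2]$ into $\asymp 1/\delta$ intervals $I_r$ of length $\delta$. Consider a vertical line $u=a$. Each non-vertical $\gamma_i$ meets it in at most $\beta$ points, so $\partial E$ meets it in at most $\alpha\beta$ points, and the slice $E_a=\{v:(a,v)\in E\}$ is a union of at most $\alpha\beta+1$ intervals. Vertical components of $\partial E$ can be absorbed by choosing the column endpoints to avoid them, or by counting them as boundary error.

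\textbf{Step 2 (inner and outer rectangle sets).} In each column $I_r\times[-2,2]$, build an inner set $E_r^-\subseteq E\cap(I_r\times[-2,2])$ and an outer set $E_r^+\supseteq E\cap(I_r\times[-2,2])$. Each is a union of $O(\alpha\beta)$ rectangles $I_r\times J$, with the $J$'s obtained from the slice at a point of $I_r$ and then shrunk or enlarged by the vertical oscillation of $\partial E$ over that column. The key geometric claim is that
$$
\sum_r\big(\mu_{\rm JST}(E_r^+)-\mu_{\rm JST}(E_r^-)\big)\ll L\delta+\alpha\beta\delta .
$$
The JST density is bounded by $1/\pi^2$, and the discrepancy region lies inside the union of the pieces of $\partial E$ within each column thickened horizontally by $\delta$. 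Its area is bounded by the total vertical variation of the curves across columns times $\delta$, plus $O(\delta)$ per curve-crossing; summed, this is $O((L+\alpha\beta)\delta)$.

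\textbf{Step 3 (sandwich and optimize).} Sandwich
$$
\sum_r\#\{p\le x:(a(p),a'(p))\in E_r^-\}\le \#\{p\le x:(a(p),a'(p))\in E\}\le\sum_r\#\{p\le x:(a(p),a'(p))\in E_r^+\}.
$$
Applying the box theorem to each of the $O(\alpha\beta/\delta)$ rectangles gives
$$
\mu_{\rm JST}(E)\pi(x)+O\!\left(\pi(x)\Big(\tfrac{\alpha\beta}{\delta}R(x)+(L+\alpha\beta)\delta\Big)\right).
$$
Taking $\delta=R(x)^{1/2}$ gives an error $\ll (L+1)\alpha\beta\,\pi(x)R(x)^{1/2}$. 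Here $R(x)^{1/2}$ is exactly the stated $(\log(kk'NN'\log x))^{1/4}(\log x)^{-1/8}$, and $L\ge$ a positive constant for nonempty boundary, so this is $\ll L\alpha\beta$ times that. We may take $R(x)<1$, since otherwise the statement is trivial.

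\textbf{Main obstacle.} The main obstacle is Step 2: rigorously controlling the boundary region using only finite length and the bounded vertical-intersection hypothesis. Curves may be non-monotone, so the $O(\alpha\beta)$ intervals in a column may merge or split within the column. I would handle this by defining $E_r^\pm$ via the set of $v$ such that the horizontal segment $I_r\times\{v\}$ lies entirely in $E$ (for $E_r^-$), or meets $\bar E$ (for $E_r^+$). Their difference is contained in the set of $v$ for which this segment meets $\partial E$, which is the projection of $\partial E\cap(I_r\times[-2,2])$ to the $v$-axis. That projection has measure at most the length of $\partial E$ in the column, which gives the $L\delta$ bound after summing over $r$. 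The count of $O(\alpha\beta)$ intervals for $E_r^\pm$ follows from the intersection hypothesis together with continuity of each $\gamma_i$.
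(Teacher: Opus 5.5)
The paper does not prove this statement. It is imported verbatim from \cite[Theorem 1.3]{kkm2}, with only the remark that it builds on Thorner and Chen--Shen, so there is no in-text argument to compare yours with. Your reduction is the natural route: a box estimate with error $R(x)$, columns of width $\delta$, $O(\alpha\beta)$ rectangles per column coming from the vertical-line hypothesis, a boundary loss $\ll L\delta$, and the choice $\delta=R(x)^{1/2}$. It is also consistent with the shape of the stated bound, both in the role of $\beta$ and in the exponents $1/4,1/8$ being half of the box exponents $1/2,1/4$. Two points need correcting.

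\textbf{The final step is false, and so is the statement as literally written.} It is not true that $L$ is bounded below when $\partial E\neq\emptyset$: a disk of radius $r$ has $L=2\pi r$. This is a defect in the statement itself, not only in your argument. Fix $x$ and a prime $p_0\le x$, and let $E$ be the closed disk of radius $r$ about $(a(p_0),a'(p_0))$, intersected with $[-2,2]^2$. Then $\alpha,\beta=O(1)$ and $L\ll r$. The left-hand side is $\ge 1$, whereas $\mu_{\rm JST}(E)\pi(x)\ll r^2\pi(x)$ and the claimed error is $\ll r\,\pi(x)$, and both tend to $0$ as $r\to 0$. What your argument actually proves is an error $\ll (L+\alpha\beta)\pi(x)R(x)^{1/2}\ll (L+1)\alpha\beta\,\pi(x)R(x)^{1/2}$, and this is the correct form: $L$ should be read as $\max(L,1)$. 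You should state that rather than assert a false lower bound. The change is harmless for the paper, which only ever uses upper bounds on $L$ (Theorem~\ref{thm:main2} and Section~\ref{sec: proof of density in R}).

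\textbf{Your inner set in Step 2 need not be a finite union of rectangles.} The set $E_r^-=I_r\times\{v: I_r\times\{v\}\subseteq E\}$ can fail this when $E$ contains only part of its boundary. For example, take an open disk together with a Cantor subset of its boundary circle; then the box theorem cannot be applied to $E_r^-$. The fix is as follows.
\begin{itemize}
\item Take half-open columns $I_r$, and let $P_r$ be the projection to the $v$-axis of $\partial E\cap(\overline{I_r}\times\mathbf{R})$.
\item For a non-vertical $\gamma_i$, each component of $\gamma_i\cap(\overline{I_r}\times\mathbf{R})$ meets one of the two edge lines, by the boundary bumping theorem for continua. Hence there are at most $\max(2\beta,1)$ components, each projecting to an interval, so $P_r$ is a union of $O(\alpha\beta)$ intervals.
\item For each complementary interval $J\subseteq[-2,2]$ of $P_r$, the connected set $I_r\times J$ misses $\partial E$, so it lies either in the interior or in the exterior of $E$.
\item Let $E_r^-$ be the union of those $I_r\times J$ contained in $E$, and set $E_r^+=E_r^-\cup(I_r\times P_r)$.
\end{itemize}
These sets are unions of $O(\alpha\beta)$ rectangles and sandwich $E\cap(I_r\times\mathbf{R})$. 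Moreover $\mu_{\rm JST}(E_r^+\setminus E_r^-)\le \pi^{-2}\delta\,|P_r|\le\pi^{-2}\delta\,\mathrm{length}(\partial E\cap(\overline{I_r}\times\mathbf{R}))$, which sums over $r$ to $\ll L\delta$, so the extra $\alpha\beta\delta$ term is unnecessary. With these two changes the argument is complete, given the box input $R(x)$ that you assume.
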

They further derive a useful special case of this theorem for semi-algebraic regions inside $[-2,2]^2$. We now state this refined version, which will play a key role in proving our results.
\begin{theorem}[{\cite[Theorem 2.1]{kkm2}}]\label{thm:main2}
Let $f$ and $f'$ be two twist-inequivalent newforms. Let $P(u,v)\in \R[u,v]$ be a non-constant polynomial of degree $\delta$. For any real interval $I$   we have
$$
\#\{p\le x :  P(a(p),a'(p))\in I\}= \mu_{\rm JST}(E_{I,P})\pi(x) + O\left(
\frac{\delta^8\pi(x) (\log(kk'NN' \log x))^{1/4}}{(\log x)^{1/8}}
\right),$$ 
where 
\begin{equation}
    E_{I,P}:=\{(u,v)\in [-2,2]^2: P(u,v)\in I\}
\end{equation}
and the implied constant is positive, absolute, and effectively computable.
\end{theorem}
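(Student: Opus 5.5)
The plan is to derive this statement directly from Theorem~\ref{thm:main1}. I will show that the region $E_{I,P}$ satisfies its hypotheses with $L$, $\alpha$, $\beta$ all bounded by explicit powers of $\delta$, and that $L\alpha\beta\ll\delta^{8}$.

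\textbf{Step 1: locate the boundary.} Write $I$ with endpoints $c_1\le c_2$; these may be infinite, and the endpoints may or may not belong to $I$. Since $P$ is continuous, the preimage of the interior of $I$ is open and the preimage of the closure of $I$ is closed. Hence
\[
\partial E_{I,P}\subseteq \{P=c_1\}\cup\{P=c_2\}\cup \partial([-2,2]^2),
\]
where only the finite endpoints occur. It therefore suffices to cover $Z_c:=\{(u,v)\in[-2,2]^2: P(u,v)=c\}$, for a real constant $c$, together with the four edges, by controlled curves. Replacing $\partial E$ by this larger union of curves is harmless: I will apply the argument behind Theorem~\ref{thm:main1} with the $\gamma_i$ taken to be pieces of this covering set. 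Its proof only uses that $\partial E$ is contained in the union of the $\gamma_i$.

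\textbf{Step 2: decompose $Z_c$ into arcs.} Factor $Q:=P-c$ as $Q=\prod_j (u-a_j)\cdot R$, where $R$ has no factor of the form $u-a$.
\begin{itemize}
\item Since $\deg Q=\delta$, there are at most $\delta$ linear factors $u-a_j$; their zero sets are vertical lines, which Theorem~\ref{thm:main1} allows.
\item On $\{R=0\}$, every vertical line $u=a$ meets the curve in at most $\deg_v R\le\delta$ points. This holds because $R(a,v)$ is not identically zero, as $u-a\nmid R$.
\item Cut $\{R=0\}\cap[-2,2]^2$ at three kinds of points: the singular points and the points with vertical tangent, i.e.\ the common zeros of $R$ and $\partial_v R$ (after first passing to the squarefree part of $R$); the points on the edges of the square; and the points on the lines $u=a_j$.
\item By B\'ezout, the first kind contributes at most $\delta(\delta-1)$ points, and the edges contribute at most $4\delta$ points.
\item By the implicit function theorem, each remaining arc is a graph $v=g(u)$ of a continuous (indeed analytic) function. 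So each arc meets every vertical line at most once, and the number of arcs is $O(\delta^2)$. Bounding the number of connected components without cut points, e.g.\ ovals, uses Harnack's bound $O(\delta^2)$; each such oval is cut at its vertical-tangent points anyway.
\end{itemize}
Doing this for $c_1$ and $c_2$ and adding the four edges gives $\alpha=O(\delta^2)$ and $\beta\le\delta$.

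\textbf{Step 3: bound the length.} I will use the Cauchy--Crofton formula: the length of a plane curve equals $\frac14\int\!\!\int n(\theta,\rho)\,d\rho\,d\theta$, where $n(\theta,\rho)$ is the number of intersections with the line $\{u\cos\theta+v\sin\theta=\rho\}$. A line not contained in $Z_c$ meets it in at most $\delta$ points, by restricting $Q$ to the line. Lines contained in $Z_c$ form a null set of $(\theta,\rho)$. Only lines with $|\rho|\le 2\sqrt2$ meet the square. Hence $\operatorname{length}(Z_c)\le \frac14\cdot 2\pi\cdot 4\sqrt2\,\delta=O(\delta)$. Adding the perimeter $16$ gives $L=O(\delta)$.

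\textbf{Step 4: conclude.} Inserting these bounds into Theorem~\ref{thm:main1} gives an error term $O(L\alpha\beta)=O(\delta^{4})\le O(\delta^{8})$ times the stated factor, with an absolute, effective implied constant. The main term is $\mu_{\rm JST}(E_{I,P})\pi(x)$.

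\textbf{Main obstacle.} The delicate point is Step 2: producing a decomposition into finitely many continuous curves, each meeting vertical lines a bounded number of times, with the number of pieces bounded uniformly in terms of $\delta$ alone and not depending on $c$ or on the coefficients of $P$. I would handle this with B\'ezout applied to $R$ and $\partial_v R$, after making $R$ squarefree so that the two have no common component. I would also deal separately with the degenerate case where $R$ is constant in $v$, which falls under the vertical-line factors.
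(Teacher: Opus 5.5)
Your proposal follows the same route as the paper, which quotes Theorem~\ref{thm:main2} from \cite{kkm2} as the semi-algebraic special case of Theorem~\ref{thm:main1}. You place $\partial E_{I,P}$ inside the level curves $P=c_1$, $P=c_2$ and the edges of the square, split these into polynomially many vertical segments and graphs, and bound their total length in terms of $\delta$ (the geometric input the paper takes from \cite[Lemma~5.1]{kkm2} in the proof of Theorem~\ref{thm:density-ratio}); your B\'ezout and Cauchy--Crofton bounds even give $L\alpha\beta\ll\delta^{4}$.

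A few details need tightening. Theorem~\ref{thm:main1} requires $\partial E$ to \emph{equal} $\bigcup_i\gamma_i$, and you cannot rely on what its proof uses. You can get equality by also cutting the edges at their $O(\delta)$ intersections with the level curves: each resulting open piece then lies entirely inside or entirely outside $\partial E_{I,P}$, since the local sign pattern of $P-c_i$ is constant along it. Next, a cut point of multiplicity $\mu$ can terminate up to $2\mu\le 2\delta$ arcs, so your $O(\delta^2)$ arc count needs a multiplicity argument; the naive bound $O(\delta^3)$ already suffices for the exponent $8$. Finally, factors of $R$ involving $u$ alone have no real zeros but would block B\'ezout for $R$ and $\partial_v R$, so discard them first.
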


\subsection{A Measure Approximation Result}
We recall a quantitative estimate that provides an upper bound for the measure of the set on which a polynomial takes small values. Such results will be useful in controlling contributions from regions where certain polynomial expressions are small.

For a polynomial $P(u)\in \R[u]$ and an interval $I$, define
$$
    \lVert P \rVert_I = \sup_{u\in I} |P(u)|.
$$
For a set $A$, let $\mathbbm{1}_A$ denote its indicator function.
Then, we have the following result due to Kleinbock and Margulis. 
\begin{proposition}[{{\cite[Proposition 3.2]{margulis}}}]\label{thm: margulis}
Let $P(u)\in \R[u]$ be a polynomial of degree $\ell$, and let $I=[a,b]$. Then for any $\e>0$,
$$
    \int_I \mathbbm{1}_{\{u : |P(u)| < \e\}} \, du
    \le 2 (b-a)\,\ell\,(\ell+1)^{1/\ell}
    \left(\frac{\e}{\lVert P \rVert_I}\right)^{1/\ell}.
$$
\end{proposition}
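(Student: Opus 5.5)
The plan is to argue by Lagrange interpolation: a polynomial of degree $\ell$ that is small at $\ell+1$ well-separated points cannot be large anywhere on $I$. Write $S=\{u\in I: |P(u)|<\varepsilon\}$ and $m=\int_I \mathbbm{1}_S\,du$ for its Lebesgue measure. We may assume $m>0$ and $\ell\ge 1$; for $\ell=0$ the exponent $1/\ell$ is meaningless, so we read the statement for nonconstant $P$. Inverting the claimed inequality, it suffices to prove
$$
\lVert P\rVert_I \le \varepsilon\left(\frac{2\ell(b-a)}{m}\right)^{\ell}\cdot \frac{1}{\ell!}.
$$
Since $1/\ell!\le \ell+1$, this gives $m\le 2\ell(b-a)(\ell+1)^{1/\ell}\bigl(\varepsilon/\lVert P\rVert_I\bigr)^{1/\ell}$, which is the claim. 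In fact it gives the slightly stronger bound with $(1/\ell!)^{1/\ell}$ in place of $(\ell+1)^{1/\ell}$.

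\textbf{Step 1: choosing well-separated nodes.} Use the distribution function $\phi(u)=\int_a^u \mathbbm{1}_S$, which is continuous and nondecreasing from $0$ to $m$ with Lipschitz constant $1$. For $i=0,\dots,\ell$ set $x_i=\inf\{u\in I:\phi(u)\ge im/\ell\}$.

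Each $x_i$ lies in the closure $\overline S$. Indeed, if some neighbourhood of $x_i$ missed $S$, then $\phi$ would be constant on that neighbourhood, contradicting the definition of $x_i$ as an infimum. At $i=0$ this needs a small tweak: take the first point of $\overline S$, i.e.\ the infimum of $\{u:\phi(u)>0\}$.

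By continuity $|P(x_i)|\le\varepsilon$. Since $\phi$ is $1$-Lipschitz, $x_j-x_i\ge \phi(x_j)-\phi(x_i)= (j-i)m/\ell$ for $j>i$. Thus the nodes are distinct and satisfy $|x_i-x_j|\ge |i-j|\,m/\ell$.

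\textbf{Step 2: interpolation bound.} Since $\deg P\le \ell$, Lagrange interpolation at the $\ell+1$ nodes is exact:
$$
P(u)=\sum_{i=0}^{\ell}P(x_i)\prod_{j\ne i}\frac{u-x_j}{x_i-x_j}.
$$
For $u\in I$ each numerator factor satisfies $|u-x_j|\le b-a$. For the denominators,
$$
\prod_{j\ne i}|x_i-x_j|\ge (m/\ell)^{\ell}\, i!\,(\ell-i)!.
$$
Hence
$$
|P(u)|\le \varepsilon\,(b-a)^{\ell}(\ell/m)^{\ell}\sum_{i=0}^{\ell}\frac{1}{i!(\ell-i)!}=\varepsilon\,\frac{(2\ell(b-a)/m)^{\ell}}{\ell!},
$$
using $\sum_i\binom{\ell}{i}=2^\ell$. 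Taking the supremum over $u\in I$ gives the displayed inequality, and solving for $m$ finishes the proof.

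\textbf{Main obstacle.} The only delicate point is Step 1: producing $\ell+1$ points of $\overline S$ whose pairwise gaps are controlled by the measure of $S$ rather than by its diameter. $S$ may be a union of many tiny intervals, and the distribution-function trick handles this uniformly. The rest is the routine Lagrange computation. The passage from the strict inequality $|P|<\varepsilon$ to $|P|\le\varepsilon$ on $\overline S$ is harmless by continuity.
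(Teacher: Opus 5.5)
Your proof is correct, and it gives a sharper constant than the one stated. It takes a different route from the paper, which gives no argument of its own: it quotes Kleinbock--Margulis for open intervals and passes to $[a,b]$ by a limiting remark.

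You instead give a self-contained proof by Lagrange interpolation at $\ell+1$ points of $\overline{S}$, carried out directly on the closed interval. This is the same mechanism as in the Kleinbock--Margulis estimate. Working on $[a,b]$ directly removes the limiting step, which is harmless anyway: $(a,b)$ and $[a,b]$ differ by a null set and have the same sup norm.

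What your version adds is the choice of nodes through the distribution function $\phi$. This gives the graded separation $|x_i-x_j|\ge |i-j|\,m/\ell$ instead of a single minimum gap. Combined with $\sum_i 1/(i!(\ell-i)!)=2^\ell/\ell!$, it replaces $2\ell(\ell+1)^{1/\ell}$ by $2\ell(\ell!)^{-1/\ell}$. Since $\ell!\ge(\ell/e)^\ell$, the new constant is at most $2e$, whereas the stated one grows linearly in $\ell$. The factor $(\ell+1)^{1/\ell}$ is what one gets by bounding each of the $\ell+1$ Lagrange basis polynomials uniformly.

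Your improvement would also sharpen the upper bounds for $C_{m,n,\varepsilon}$ and $D_{m,n,\varepsilon}$ that the paper derives from this proposition. The citation route buys only brevity.

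Two points you use implicitly deserve a sentence each:
\begin{itemize}
\item $\phi(x_i)=im/\ell$ for every $i$, including $\phi(x_0)=0$ for the modified $x_0$. This is exactly what the Lipschitz separation estimate needs.
\item $x_i>a$ for $i\ge 1$, because $\phi(a)=0<im/\ell$. So the left neighbourhood used in your closure argument exists.
\end{itemize}
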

We remark that the original result \cite[Proposition 3.2]{margulis} is stated for an open interval $I$. However, because a closed interval $[a,b]$ can be written as union of open intervals $(a-\frac{1}{j}, b+ \frac{1}{j})$, $j\ge 1$,  thus using the limiting argument gives Proposition \ref{thm: margulis}.

\section{Proof of Theorem \ref{thm:sympower-diagonal1}}\label{sec: proof of sympower diagonal}
For primes $p\nmid NN'$, the Hecke relations imply that the Fourier coefficients at prime powers satisfy 
\begin{equation}\label{eq:hecke relation}
     a(p^m)=U_m\left(\frac{a(p)}{2}\right) \quad {\rm and } \quad  a'(p^n)= U_n\left(\frac{a'(p)}{2}\right),
\end{equation}
where 
$U_m, U_n$ denote the Chebyshev polynomial of the second kind of degree $m$ and $n$, respectively.
Define the polynomial $P(u,v) = U_m(u/2)-U_n(v/2)$ and consider the set
\begin{equation}\label{eq: E m n epsilon}
      E_{m,n,\varepsilon}:= \{(u,v)\in [-2,2]^2: -\e < P(u,v)< \e \}.
\end{equation}
Then for every prime $p\nmid NN'$, we have the equivalence
$$|a(p^m)-a'(p^n)| < \varepsilon \quad\text{if and only if}\quad (a(p),a'(p))\in E_{m,n,\e}.$$
Thus, 
$$\#\{p\le x:|a(p^m)-a'(p^n)|< \e\}
                    =  \#\{p\le x:(a(p),a'(p))\in E_{m,n,\e}\}+O(1),$$
    where the $O(1)$ term accounts for the finitely many primes dividing $NN'$.
  Applying Theorem \ref{thm:main2} for the polynomial $P(u,v)$ and the interval $(-\e,\e)$, we obtain
\begin{equation*}\label{eq:Setb1}
            \#\{p\le x:|a(p^m)-a'(p^n)|< \e\}
                    = 
                \mu_{\rm JST}(E_{m,n,\e})\pi(x)
                +
               O\!\left(
\frac{\pi(x) (\log(kk'NN' \log x))^{1/4}}{(\log x)^{1/8}}
\right),
\end{equation*}
with the implied constant depending on $m$ and $n.$
 Setting $$C_{m,n,\e}:=\mu_{\rm JST}(E_{m,n,\e}),$$ we obtain the required asymptotic formula.

\textbf{Lower bound of $\boldsymbol{C_{m,n,\e}}$:}
Using the parametrization
$$u=2 \cos \theta,\, v= 2\cos\phi, \quad \theta, \phi \in[0,\pi]$$
and the identity
$$U_{\ell}(\cos \psi)=\frac{\sin ((\ell+1)\psi)}{\sin \psi}=:F_{\ell}(\psi),$$
the joint Sato--Tate measure becomes
\begin{equation}\label{eq: C m n e polar}
        C_{m,n,\e} = \frac{4}{\pi^2}\int_{(\theta,\phi)\in [0,\pi]^2} \mathbbm{1}_{\{(\theta,\phi): |F_m(\theta)-F_n(\phi)|< \e\}}\, \sin^2\theta \sin^2\phi\, d\theta d\phi,
\end{equation}
where for a set $A$, $\mathbbm{1}_{A}$ denotes the indicator function of the set $A$. Restricting the domain of integration to $\theta,\phi \in  [\frac{\pi}{6}, \frac{5\pi}{6}]$, yields
$$ C_{m,n,\e} \geq \frac{1}{4\pi^2}\int_{(\theta,\phi)\in [\frac{\pi}{6},\frac{5\pi}{6}]^2} \mathbbm{1}_{\{(\theta,\phi): |F_m(\theta)-F_n(\phi)|< \e\}}\, d\theta d\phi,
$$
because the integrand is non-negative and on this interval $\sin \theta, \sin \phi \geq \frac{1}{2}.$
We now further restrict the integral to $\phi \in [\frac{\pi}{6}, \frac{5\pi}{6}]$ satisfying $|\sin ((n+1)\phi)|\le \frac{{1}}{2}$ to obtain
\begin{equation}\label{eq: inner integral difference lower bound}
        C_{m,n,\e} \ge \frac{1}{ 4\pi^2} \int_{\frac{\pi}{6}}^{\frac{5\pi}{6}} \left(  \int_{\frac{\pi}{6}}^{\frac{5\pi}{6}} \mathbbm{1}_{\{\theta: |F_m(\theta)-F_n(\phi)|< \e\}}\, d \theta \right)  \left(\mathbbm{1}_{\{\phi: |\sin((n+1)\phi)| \le \frac{1}{2}\}}\right)\, d\phi.
\end{equation}
Fix $\phi$ such that $|\sin(n+1)\phi)| \le \frac{1}{2}$. Then using $F_n(\phi)=\frac{\sin ((n+1)\phi)}{\sin \phi}$, and the fact that $\sin \phi \geq \frac{1}{2}$ on the interval $[\frac{\pi}{6},\frac{5\pi}{6}]$, we have $$F_n(\phi)\in [-1,1].$$
Since the function $F_m$ is continuous on $[\frac{\pi}{6}, \frac{5\pi}{6}]$  and its range contains $[-1,1]$,  therefore for each $\phi$ in the  domain of integration, there exists a point $\theta_\phi \in [\frac{\pi}{6}, \frac{5\pi}{6}]$ such that
$$F_m(\theta_\phi)=F_n(\phi).$$
Hence, for such  a $\phi$ the inner integral of \eqref{eq: inner integral difference lower bound} becomes
\begin{equation*}
    \int_{\frac{\pi}{6}}^{\frac{5\pi}{6}} \mathbbm{1}_{\{\theta : |F_m(\theta)-F_n(\phi)|< \e\}} \,d \theta =  \int_{\frac{\pi}{6}}^{\frac{5\pi}{6}} \mathbbm{1}_{\{\theta:|F_m(\theta)-F_m(\theta_\phi)|< \e\}}\, d \theta.
\end{equation*}
Now, as $F_m$ is differentiable, a direct derivative computation gives
$$F'_m(\theta)\le 4(m+2), \quad {{\text{ for all }}} \theta \in \bigg[\frac{\pi}{6}, \frac{5\pi}{6}\bigg]$$ and hence $F_m$ is Lipschitz on $[\frac{\pi}{6}, \frac{5\pi}{6}]$. It follows that
$$|F_m(\theta)-F_m(\theta_\phi)|< \e, \quad \text{ whenever } |\theta-\theta_\phi| < \frac{\e}{4(m+2)}.$$
Thus,
$$
       \int_{\frac{\pi}{6}}^{\frac{5\pi}{6}} \mathbbm{1}_{\{\theta : |F_m(\theta)-F_m(\theta_\phi)|< \e\}} \,d \theta \ge  \int_{\frac{\pi}{6}}^{\frac{5\pi}{6}} \mathbbm{1}_{\{\theta : |\theta-\theta_\phi|< \frac{\e}{4(m+2)}\}}\, d \theta \ge \frac{\e}{4(m+2)}.   
$$
Substituting this bound into inequality \eqref{eq: inner integral difference lower bound}, we obtain
\begin{equation}\label{eq: outer integral lower bound}
        C_{m,n,\e} \ge \frac{\e}{16 (m+2)\pi^2}  \int_{\frac{\pi}{6}}^{\frac{5\pi}{6}} \mathbbm{1}_{\{\phi : |\sin((n+1)\phi)| \le \frac{1}{2}\}} d\phi.     
\end{equation}
It remains to estimate the measure of the set
$$\left\{\phi \in \left[\frac{\pi}{6}, \frac{5\pi}{6}\right]:|\sin((n+1)\phi)|\le \frac{1}{2}\right\}.$$

Observe that
$$|\sin((n+1)\phi)|\le \frac{1}{2} \iff
\phi\in \bigcup\limits_{j\in \Z} \left[\left(j\pi-\frac{\pi}{6}\right) \frac{1}{n+1},\left(j\pi+\frac{\pi}{6}\right) \frac{1}{n+1}\right].
$$
Each of these intervals has length $\frac{\pi}{3(n+1)}$ and a simple counting argument shows that the interval $[\frac{\pi}{6},\frac{5\pi}{6}]$ contains at least $\lfloor \frac{2n+1}{3} \rfloor$ such disjoint subintervals.
Therefore, $$
        \int_{\frac{\pi}{6}}^{\frac{5\pi}{6}} \mathbbm{1}_{\{\phi: |\sin((n+1)\phi)| \le \frac{1}{2}\}} d\phi \ge  \left\lfloor\frac{2n+1}{3}\right \rfloor \frac{\pi}{3(n+1)}.
$$
Substituting this estimate into \eqref{eq: outer integral lower bound} and using the inequality $\left\lfloor\frac{2n+1}{3}\right \rfloor \ge \frac{n-1}{3}$, 
we obtain for $m, n \ge 1$
$$
        C_{m,n,\e} \ge  \frac{(2n-1)}{144 \pi(m+2)(n+1)}\e,
$$
and this completes the proof.

\textbf{Upper bound of $\boldsymbol{C_{m,n,\e}}$:}
By discarding the weight functions, we obtain
\begin{equation}\label{eq: C m n epsilon}
        C_{m,n,\e} 
        \le  \frac{1}{\pi^2} \int_{-2}^{2}\left( \int_{-2}^{2} \mathbbm{1}_{\{u:|U_m(u/2)-U_n(v/2)|< \e\}} \,du\right)dv.
\end{equation}
 We now estimate the inner integral. For a fixed $v\in [-2,2]$, applying Proposition \ref{thm: margulis} to the polynomial $U_m(u/2)-U_n(v/2)$ in the variable $u$ for interval $I=[-2,2]$, we obtain
\begin{equation}\label{eq: margulis result}
        \int_{-2}^{2}  \mathbbm{1}_{\{u : |U_m(u/2) - U_n(v/2)|< \e\}}du  \le 8 m (m+1)^{1/m}\left(\frac{\e}{\lVert U_m(u/2) - U_n(v/2) \rVert_{I}}\right)^{1/m}.
  \end{equation}
To bound the denominator from below, we note that for a fixed real number $A$ and a polynomial $P(u)$ defined on $I$, we have
$$
         \lVert P(u) - A \rVert_{I} \ge \frac{1}{2} \left(\max_{u\in I} (P(u)) - \min_{u\in I}(P(u)) \right).
$$
Putting $P(u)=U_m(u/2)$, and using  $\min_{u\in I}(U_m(u/2)) \le -1$, $\max_{u\in I}(U_m(u/2)) = m+1$,  we have
\begin{equation*}\label{eq: lower bound for sup norm}
    \lVert U_m(u/2) - U_n(v/2) \rVert_{I} \ge \frac{m+2}{2},
\end{equation*}
Using this estimate in inequality \eqref{eq: margulis result}, we obtain
\begin{equation*}\label{eq: difference sup norm}
        \int_{-2}^{2}  \mathbbm{1}_{\{|U_m(u/2)-U_n(v/2)| < \e\}}du \le 8  m (2\e)^{1/m}.
\end{equation*}
Substituting it into \eqref{eq: C m n epsilon}, we conclude that
$$
            C_{m,n,\e} \le \frac{32  }{\pi^2}  m (2\e)^{1/m},
$$
completing the proof.

\section{Proof of Theorem \ref{thm: sympower product}} \label{sec: proof of sympower product}
Let $P(u,v) = U_m(u/2)U_n(v/2)$ and 
$$
        E_{m,n,\e}^{'} \coloneqq \{ (u,v)\in [-2,2]^2 : -\e < P(u,v) < \e\}.
$$

Then, for $p\nmid NN'$,  using the Hecke relations, we have 
$$
    |a(p^m)a'(p^n)| < \varepsilon \quad\text{if and only if}\quad (a(p),a'(p))\in E_{m,n,\e}^{'}.
$$
Hence, by Theorem \ref{thm:main2},
 \begin{equation}\label{eq: Setproduct}
            \#\{p\le x:|a(p^m)a'(p^n)|< \e\} = 
                \mu_{\rm JST}(E_{m,n,\e}^{'})\pi(x)
                +
               O\!\left(
\frac{\pi(x) (\log(kk'NN' \log x))^{1/4}}{(\log x)^{1/8}}
\right).
\end{equation}
Setting $D_{m,n,\e}=\mu_{\rm JST}(E_{m,n,\e}^{'})$, proves the asymptotic formula.
We now establish lower and upper bounds for $D_{m,n,\e}.$

\textbf{Lower bound of $\boldsymbol{D_{m,n,\e}}$:}
Observe that
$$
       E_{m,n,\e}^{'} \supset \{ u\in [-2,2] : |U_m(u/2)|< \sqrt{\e}\} \times \{ v\in [-2,2] : |U_n(v/2)|< \sqrt{\e}\}.
$$
Making the change of variables
$$u=2\cos \theta, \quad v = 2\cos\phi, \quad \text{ with }  \theta, \phi \in [0,\pi],$$
we obtain
\begin{equation}\label{eq: D m n e lower bound}
        D_{m,n,\e} \ge \frac{4}{\pi^2} \left(\int_{0}^{\pi} \mathbbm{1}_{\{ \theta : |U_m(\cos\theta)|< \sqrt{\e} \}} \sin^2 \theta \, d\theta\right) \left(\int_{0}^{\pi} \mathbbm{1}_{\{ \phi : |U_n(\cos\phi)|< \sqrt{\e} \}} \sin^2 \phi \, d\phi\right).
\end{equation}
It is therefore sufficient to give a lower bound for the integral
$$
        \int_{0}^{\pi} \mathbbm{1}_{\{ \psi : |U_{\ell}(\cos\psi)|< \sqrt{\e} \}} \sin^2 \psi \, d\psi,
$$
where $\ell\ge 1$ is an integer. For that, we restrict the domain of integration to suitable intervals around the zeros of $U_\ell(\cos \psi)$ lying in the interval $[0, \pi]$.

Note that $U_\ell(\cos\psi)$ vanishes at $\frac{j \pi}{\ell+1}$, for $1\le j\le \ell.$ Thus, for $1\le j\le \ell$,  set
\begin{equation}\label{eq: deltaj}
    \psi_j =   \frac{j \pi}{\ell+1}, \quad  \delta_j = \frac{\sqrt{\e} \sin \psi_j}{ 2(\ell+1)}, \quad{\rm and}\quad I_j = (\psi_j - \delta_j, \psi_j + \delta_j). 
\end{equation}
The following assertions hold for any positive integer $1\le j\le \ell$.
\begin{enumerate}
    \item $I_j$'s are mutually disjoint subintervals of $[0,\pi]$.
    \item $\sin\psi > \left(1-\frac{\sqrt{\e}}{2(\ell+1)}\right)\sin \psi_j$ for all $\psi\in I_j.$
    \item $|U_\ell (\cos \psi)| < \sqrt{\e}$ for all $\psi\in I_j$.
\end{enumerate}
We now prove the above assertions.
\begin{proof}
$(i)$ To show that $I_j\subset [0,\pi]$, it is enough to show that $\psi_1-\delta_1> 0$ and $\psi_\ell+\delta_\ell<\pi$, which is immediate from the following observation
$$
        \delta_j = \frac{\sqrt{\e} \sin \psi_j}{ 2(\ell+1)} < \frac{\psi_j}{2(\ell+1)} < \frac{\pi}{2(\ell+1)}, \quad \text{for } j\ge 1,
$$
where we have used the facts that $\sin \psi\le \psi$ for $\psi>0$. Furthermore,  $\psi_{j+1}-\psi_j = \frac{\pi}{\ell+1}$, and hence $\psi_{j+1} + \delta_{j+1} > \psi_j + \delta_j$ for $1\le j\le \ell-1$, which shows that the intervals $I_j$'s are disjoint.

$(ii)$ Applying the mean value theorem to the sine function in the interval $[\psi, \psi_j]$ or $[\psi_j, \psi]$ for any $\psi\in I_j$, we have
$$
    |\sin\psi -\sin\psi_j| < \delta_j,
$$
equivalently, $-\delta_j < \sin\psi-\sin\psi_j < \delta_j$, thus the desired lower bound follows directly after substituting the value of $\delta_j$. 

$(iii)$ If $\psi\in I_j$, then $\psi = \psi_j + t$ with $|t|< \delta_j$, and $|\sin((\ell+1)\psi)|< (\ell+1)|\delta_j|$. Therefore, using part $(i)$ we have
$$
     |U_\ell(\cos \psi)| = \left\lvert \frac{\sin((\ell+1)\psi)}{\sin\psi}\right\rvert < \frac{\sqrt{\e} \sin \psi_j}{2 \sin \psi_j}< \sqrt{\e},     
$$
completing the proof.
\end{proof}

As a consequence, we obtain 
$$
          \int_{0}^{\pi} \mathbbm{1}_{\{ \psi : |U_{\ell}(\cos\psi)|< \sqrt{\e} \}} \sin^2 \psi \, d\psi \ge \sum_{1\le j \le \ell}  \left(1-\frac{\sqrt{\e}}{2(\ell+1)}\right)^2  \left(\sin^2\psi_j    \int_{I_j}  d\psi\right).
$$
Note that the interval $I_j$ has length $2\delta_j$, thus substituting the value of $\delta_j$ from \eqref{eq: deltaj} gives
\begin{equation}\label{eq: integral over I j}   
    \int_{0}^{\pi} \mathbbm{1}_{\{ \psi : |U_{\ell}(\cos\psi)|< \sqrt{\e} \}} \sin^2 \psi \, d\psi \ge \left(1-\frac{\sqrt{\e}}{2(\ell+1)}\right)^2 \frac{\sqrt{\e}}{\ell+1}    \sum_{1\le j\le \ell}    \sin^3\psi_j.
    \end{equation}
Using the trigonometric relation $\sin (3\psi) = 3 \sin \psi - 4 \sin^3 \psi$
and putting the value of $\psi_j$ from \eqref{eq: deltaj}, we have
$$
    \sum_{1\le j\le \ell}    \sin^3\psi_j = \frac{3}{4} \sum_{1\le j\le \ell}    \sin\left(j\frac{ \pi}{\ell+1}\right) - \frac{1}{4} \sum_{1\le j\le \ell} \sin \left(j\frac{3 \pi}{\ell+1}\right).    
$$
Applying the formula (see \cite[p. 37]{gradshteyn}) 
$$
\sum_{1\le j\le \ell} \sin (j\psi) = \frac{\sin \left(\frac{\ell \psi}{2}\right) \sin \left(\frac{(\ell+1) \psi}{2}\right)}{\sin \left(\frac{\psi}{2}\right)},
$$  
we obtain that
$$
      \sum_{1\le j\le \ell}    \sin^3\psi_j = \frac{3}{4}    \frac{\sin \left(\frac{\ell \pi}{2(\ell+1)}\right)}{\sin \left(\frac{ \pi}{2(\ell+1)}\right)} + \frac{1}{4} \frac{\sin \left(\frac{3\ell \pi}{2(\ell+1)}\right)}{\sin \left(\frac{ 3\pi}{2(\ell+1)}\right)} = \frac{3}{4} \cot \left(\frac{\pi}{2(\ell+1)}\right) - \frac{1}{4} \cot \left(\frac{3\pi}{2(\ell+1)}\right).
$$
Now, we invoke the relation $\cot(3\psi) = \frac{\cot^3 \psi - 3\cot \psi}{3\cot^2\psi -1}$ to obtain
$$
      \sum_{1\le j\le \ell}    \sin^3\psi_j = \frac{2\cot^3 \left(\frac{\pi}{2(\ell+1)}\right)}{ 3 \cot^2 \left(\frac{\pi}{2(\ell+1)}\right)-1 } \ge \frac{2}{3} \cot\left(\frac{\pi}{2(\ell+1)}\right) \ge \frac{2}{3} \left( \frac{2(\ell+1)}{\pi} - \frac{\pi}{4(\ell+1)} \right),    
$$
where we have used $\cot \psi > \frac{1}{\psi} - \frac{\psi}{2}$, for $\psi>0$. Therefore, we have
$$
       \sum_{1\le j\le \ell}    \sin^3\psi_j  \ge \frac{4\ell}{3\pi}. 
$$
Using this inequality in \eqref{eq: integral over I j}, we have
$$
        \int_{0}^{\pi} \mathbbm{1}_{\{ \psi : |U_{\ell}(\cos\psi)|< \sqrt{\e} \}} \sin^2 \psi \, d\psi \ge \frac{4\ell}{3\pi}\left(1-\frac{\sqrt{\e}}{2(\ell+1)}\right)^2 \frac{\sqrt{\e}}{\ell+1} \ge \frac{4\ell}{3\pi}\left(1-\frac{\sqrt{\e}}{\ell+1}\right)^2 \frac{\sqrt{\e}}{\ell+1}.  
$$
Substituting this in \eqref{eq: D m n e lower bound}, we obtain 
$$
        D_{m,n,\e} \ge \frac{64mn}{9\pi^4}\left(1- \frac{\sqrt{\e}}{m+1}\right) \left(1- \frac{\sqrt{\e}}{n+1}\right) \left(\frac{\e}{(m+1)(n+1)}\right).
$$

\textbf{Upper bound of $\boldsymbol{D_{m,n,\e}}$:}
It remains to estimate an upper bound of the constant $D_{m,n,\e}$. In order to achieve this, we start by noting that
$$
        E_{m,n,\e}^{'} \subset \{ (u,v)\in [-2,2]^2 : |U_m(u/2)|< \sqrt{\e}  \} \cup \{ (u,v)\in [-2,2]^2 : |U_n(v/2)|< \sqrt{\e}  \},
 $$
and therefore,
\begin{equation}\label{eq: D m n epsilon}
        D_{m,n,\e} 
        \le  \frac{1}{\pi^2} \int_{-2}^{2}\left( \int_{-2}^{2} \mathbbm{1}_{\{u : |U_m(u/2)|< \sqrt{\e}\}} du\right)dv +  \frac{1}{\pi^2}\int_{-2}^{2}\left( \int_{-2}^{2} \mathbbm{1}_{\{v : |U_n(v/2)|< \sqrt{\e}\}} dv\right)du.
\end{equation}
We apply Proposition \ref{thm: margulis} to  both the inner integrals to obtain
$$
        D_{m,n,\e} \le \frac{1}{\pi^2} \left(  8m \e^{1/(2m)} \int_{-2}^{2} dv  + 8n \e^{1/(2n)} \int_{-2}^{2} du  \right)=\frac{32}{\pi^2} \left(m \e^{1/(2m)}+n \e^{1/(2n)} \right),
$$
which gives the required upper bound.

\section{Proof of Theorem \ref{Atkin}}\label{sec: proof of atkin}
For simplicity, put
\begin{equation}\label{eq: defn M x}
 M(x) = \frac{4(\log( kk'NN' \log x))^{1/4}}{{( \log x)^{1/8}}}  \quad {\rm and} \quad  \e(x) = \frac{4(\log  (\log x))^{1/4}}{(\log x)^{1/8}}.
 \end{equation} 
Then for sufficiently large $x$, $M(x)$ is a strictly decreasing function, and therefore for any prime $p\in (x,2x]$, we have
 $$\e(p) = \frac{4(\log(\log p))^{1/4}}{{(\log p)^{1/8}}} < M(x).$$
It follows that
 \begin{equation}\label{eq: sympower difference estimation}
         \#\left\{p\in (x,2x]: |a(p^m)-a'(p^n)| < \e(p)  \right\} \le  \#\left\{p\in (x,2x]: |a(p^m)-a'(p^n)| < M(x)  \right\},
       \end{equation}
and 
\begin{equation}\label{eq: sympower product estimation}
         \#\left\{p\in (x,2x]: |a(p^m)a'(p^n)| < \e(p)  \right\} \le  \#\left\{p\in (x,2x]: |a(p^m)a'(p^n)| < M(x)  \right\}.
 \end{equation}
 Thus, it suffices to estimate these latter sets.

{\bf{Proof of $(i)$}}:  For a fixed $x$, define the set
$$
             E_{m,n,M(x)} = \{ (u,v) \in [-2,2]^2 : \lvert U_m(u/2)-U_n(v/2)\rvert < M(x)\}.
$$
For a prime $p\nmid NN'$, the Hecke relations give
$$
|a(p^m)-a'(p^n)| < M(x) \quad \text{if and only if} \quad (a(p),a'(p))\in E_{m,n,M(x)}.
$$
Without loss of generality, we may assume that $m\le n$. Therefore, applying Theorem \ref{thm:sympower-diagonal1} with $\e = M(x)$, we obtain
$$
        \#\left\{ p\in (x,2x]: |a(p^m)-a'(p^n)| < M(x) \right\} = C_{m,n,M(x)}\pi(x) +  O\!\left(
\pi(x)  M(x)
\right), 
$$
and $C_{m,n,M(x)} \le \frac{32}{\pi^2}m(2M(x))^{1/m}$, where the implied constant depend on $m$ and $n$. Therefore, for sufficiently large $x$, one has
$$
        \#\left\{ p\in (x,2x]: |a(p^m)-a'(p^n)| < M(x) \right\} = O\!\left(
        \pi(x)  M(x)^{1/m}
             \right).
$$
Substituting the value of $M(x)$ and using \eqref{eq: sympower difference estimation}, proves the desired bound.
    
{\bf{Proof of $(ii)$}}: For a fixed $x$, define
$$
  E_{m,n,M(x)}^{'} = \{ (u,v) \in [-2,2]^2 : \lvert U_m(u/2) U_n(v/2) \rvert< M(x)\}.
$$
Applying Theorem \ref{thm: sympower product} with $\e =M(x)$ and using \eqref{eq: sympower product estimation}, a similar argument as before completes the proof.

\section{Proof of Corollary \ref{cor: least prime results}}\label{sec: proof of least prime}
We present a detailed proof for the existence of prime $p_1$ such that $|a(p_1^m)-a'(p_1^n)|<\e$, and outline the argument in the complementary inequality.

By \Cref{thm:sympower-diagonal1}, the number of primes $p \le x$ for which
\begin{equation}\label{eq: difference (i)}
    |a(p^m) -a'(p^m)| < \e
\end{equation}
is at least 
\begin{equation*}\label{positive}
    C_{m,n,\e} \pi(x) - d_1
\frac{\pi(x) (\log(kk'NN' \log x))^{1/4}}{(\log x)^{1/8}},
\end{equation*}
for some positive constant $d_1.$ Here the constant $C_{m,n,\e}\le 1$ is positive and depends on $m,n,\e$. 
To ensure the existence of a prime satisfying \eqref{eq: difference (i)}, it suffices to choose $x$ such that
\begin{equation}\label{eq: least prime difference eq1}
          C_{m,n,\e}  - d_1
\frac{ (\log(kk'NN' \log x))^{1/4}}{(\log x)^{1/8}}>0.
\end{equation}
Let $y = \log x$ and  $A_{m,n,\e} = \frac{2d_1^4}{C_{m,n,\e}^4}$.
 Then inequality \eqref{eq: least prime difference eq1} reduces to
$$
        \sqrt{y} > \frac{A_{m,n,\e}}{2} (\log (kk'NN') + \log y).
$$
It is therefore enough to choose $y$ so that both
\begin{equation}\label{eq:two inequalities}
        \sqrt{y} >  A_{m,n,\e} \log (kk'NN')\quad {\rm and} \quad  \sqrt{y} >  A_{m,n,\e}  \log y
\end{equation}
hold. The first inequality is satisfied whenever 
$$
    y > \left(A_{m,n,\e} ( \log (kk'NN')\right)^2.
$$
For the second inequality, we note that $\frac{\sqrt{y}}{\log y}\to \infty$ as $y\to\infty$. Thus, 
$$
\sqrt{y} > A_{m,n,\e} \log y \quad \text{for} \quad  y= O(1),
$$
where the implied constant depends on $m,n,\e$. Consequently, both the inequalities in \eqref{eq:two inequalities} are satisfied whenever  $y >  C_1 (\log(kk'NN'))^2$ for a suitable constant $C_1$ depending only on $m$, $n$, and $\e$. Therefore, inequality \eqref{eq: least prime difference eq1} holds for
$$
           x > (kk'NN')^{C_1\log kk'NN'}.
$$
 which gives the existence of a prime $p_1 <  (kk'NN')^{C_1\log kk'NN'}$ such that 
 $$
         |a(p_1^{m})- a'(p_1^n)| < \e.
 $$ 
 For the second assertion,   \Cref{thm:sympower-diagonal1} implies that the number of primes $p \le x$ satisfying
\begin{equation}\label{eq: difference (ii)}
    |a(p^m) -a'(p^m)| > \e
\end{equation}
is at least 
\begin{equation*}\label{positive}
  ( 1- C_{m,n,\e})\pi(x) - d_2
\frac{\pi(x) (\log(kk'NN' \log x))^{1/4}}{(\log x)^{1/8}},
\end{equation*}
for some positive constant $d_2$. Therefore, to ensure the existence of a prime $p$ such that \eqref{eq: difference (ii)} holds, it is enough to find $x$ such that the above inequality is positive, and thus arguing as before, we complete the proof.
\section{Proof of Theorem \ref{thm:density-ratio}}\label{sec: proof of density in R}
For integers $m,n>0$, $t\in \R$, and $\e>0$, define
$$
E_{m,n,\e}^{t} = \left\{(u,v)\in[-2,2]^2: U_{n}(v/2)\neq 0, \,  \left\lvert\frac{U_m(u/2)}{U_n(v/2)}-t\right\rvert < \e \right\}.
$$ 
Then by relations \eqref{eq:hecke relation}, for every $p\nmid NN'$ such that $a'(p^n)\neq 0$, we have
$$
    \left\lvert \frac{a(p^m)}{a'(p^n)}-t   \right\rvert<\e \iff (a(p),a'(p))\in E_{m,n,\e}^{t}.
$$
The boundary of the set $E_{m,n,\e}^{t}$ is contained in the union of the curves
$$
    U_n(v/2)=0, \quad  U_m(u/2)-(t-\e)U_n(v/2)=0, \quad   U_m(u/2) - (t+\e) U_n(v/2)=0, 
$$
along with the lines $u=\pm 2$ and $v=\pm 2$. Thus, applying {\cite[Lemma 5.1]{kkm2}} to these curves,  we obtain
$$
    \partial E_{m,n,\e}^{t} =  \gamma_1\cup \cdots \cup \gamma_{\alpha},
$$
where each $\gamma_i$ is  a continuous curve,  and  $\alpha$ depends on $m$ and $n$. Moreover, if $\gamma_i$ is not a vertical line segment, then for each $a\in [-2,2]$,
$$
\#\{\gamma_i \cap \{(u,v):u=a\}\} \le \max(m,n).
$$
Applying {\cite[Lemma 5.1]{kkm2}}, we obtain that $L = {\rm length}(\partial E_{m,n,\e}^{t})$ depends only on  $m$ and $n$. Consequently, \Cref{thm:main1} yields
$$
        \#\!\!\left\{p\le x: a'(p^n)\neq 0,  \left\lvert \frac{a(p^m)}{a'(p^n)}-t   \right\rvert\!<\!\e\right\}\!=\!\mu_{\rm JST}(E_{m,n,\e}^{t})  \pi(x)+ O\!\left(
\frac{\pi(x) (\log(kk'NN' \log x))^{1/4}}{(\log x)^{1/8}}
\right),
$$
where the implied constant depends on $m$ and $n$. Writing 
$$
            F_{m,n,\e}^{t} \coloneqq \mu_{\rm JST}(E_{m,n,\e}^{t}),
$$
we see that the constant $F_{m,n,\e}^{t} $ is effectively computable. It therefore remains to show that $F_{m,n,\e}^{t} >0$, or equivalently that,   $E_{m,n,\e}^{t} $ has a positive joint Sato--Tate measure. To this end, we show that $E_{m,n,\e}^{t} $ contains a non-empty open ball, hence has a positive joint Sato--Tate measure. 

Choose  $\kappa>0$ such that
$$  
     \kappa < \frac{1}{|t|+\e+1}.           
$$
Then $|\kappa t| < 1$ and $|\kappa|<1$. Since
$$[-1,1] \subset U_m([-1,1]) \quad \text{and} \quad [-1,1] \subset U_n([-1,1]),
$$
there exists $(u_0,v_0)\in [-2,2]^2$ such that
$$
        U_m(u_0/2) = \kappa t\quad {\rm and}  \quad  U_n(v_0/2) = \kappa.
$$
In particular,
$$
       U_n(v_0/2)\neq 0 \quad \text{and} \quad     \frac{ U_m(u_0/2) }{U_n(v_0/2)} = t.
$$
Therefore, the function $ (u,v) \mapsto \frac{ U_m(u/2) }{U_n(v/2)}$ is continuous at $(u_0,v_0)$ and there exists an open ball $\mathcal N_r((u_0,v_0))$  of radius $r>0$ centred at $(u_0,v_0)$ such that
$$
        \left\lvert \frac{ U_m(u/2) }{U_n(v/2)}-t  \right\rvert <\e, \quad \text{for all } (u,v) \in \mathcal{N}_r((u_0,v_0)).
$$
Therefore $\mathcal{N}_r((u_0,v_0)) \subset E_{m,n,\e}^{t} $, and this completes the proof. 
\section{Concluding Remarks} \label{sec: concluding remarks}
The results obtained in this paper reveal some interesting aspects of the arithmetic relations among the Fourier coefficients of two twist-inequivalent newforms.  We know that the density of primes $p$ for which
$ a(p^m)-a'(p^n)=0$ or $ a(p^m)a'(p^n)=0$ is zero. In contrast, Theorems ~\ref{thm:sympower-diagonal1} and \ref{Atkin} show that, for every fixed $\varepsilon>0$, each of the inequalities
$$
|a(p^m)-a'(p^n)|<\varepsilon
\quad\text{and}\quad
|a(p^m)a'(p^n)|<\varepsilon
$$
hold for a positive density of primes. In particular,
$$
\liminf_{p\to\infty}|a(p^m)-a'(p^n)|=0,
$$
so the coefficients come arbitrarily close to each other infinitely often. On the other hand, replacing the fixed constant $\varepsilon$ by the quantity
\[
\frac{(\log\log p)^{1/4}}{(\log p)^{1/8}},
\]
reduces the corresponding sets of primes to density zero. Thus, allowing a fixed error and one that tends to zero leads to fundamentally different distributional behaviour.

The behavior of the ratios differs in nature  and they are quantitatively dense in $\mathbf{R}$, in the sense that every non-empty open interval contains ratios of Fourier coefficients for a positive density of primes. Hence, the existence of a non-empty open interval containing ratios of Fourier coefficients for only a density-zero set of primes forces the two newforms to be twist-equivalent. Taken together, our results demonstrate that not only exact algebraic relations, but also the closeness of Fourier coefficients, their individual sizes, and distribution of their ratios can be used to determine the newforms.

\section*{Acknowledgement}
AK was supported by ANRF under the Research Grant ANRF/ARGM/2025/000494/MTR. MK was supported by SEED Grant SGT-100115.
 \bibliography{reference}
	\bibliographystyle{alpha}

\end{document}